\tikzset{
	>=stealth,
}
\pgfplotsset{compat = newest}
\pgfplotsset{colormap/violet}
\renewcommand\nomgroup[1]{%
	\item[\bfseries
	\ifstrequal{#1}{A}{Indices and index sets}{%
		\ifstrequal{#1}{B}{Parameters}{%
			\ifstrequal{#1}{C}{Decision variables}{%
	}}}%
	]\vspace{0.1in}}
\newtheorem{theorem}{Theorem}
\newcommand{\cG}{{\cal G}}
\newcommand{\cB}{{\cal B}}
\newcommand{\cL}{{\cal L}}
\newcommand{\cD}{{\cal D}}
\newcommand{\cT}{{\cal T}}
\newcommand{\cC}{{\cal C}}
\newcommand{\st}{\mbox{s.t.}}
\renewcommand{\underbar}{\underaccent{\bar}}
\newcommand{\tcb}{\textcolor{black}}
\def\BibTeX{{\rm B\kern-.05em{\sc i\kern-.025em b}\kern-.08em
		T\kern-.1667em\lower.7ex\hbox{E}\kern-.125emX}}
\begin{document}
	\title{Multi-period Power System Risk Minimization under Wildfire Disruptions}
	\author{Hanbin Yang, Noah Rhodes, Haoxiang Yang, Line Roald, Lewis Ntaimo%
		\thanks{Hanbin Yang and Haoxiang Yang are supported by the National Natural Science Foundation of China, under the grant numbers 72201232 and 72231008. Noah Rhodes and Line Roald are supported by the U.S. National Science Foundation ASCENT program under award 2132904 and by the Department of Energy, Office of Science, Office of Advanced Scientific Computing Research, Applied Mathematics program under Contract Number DE-AC02-06CH11347.}%
	}
	
	
	\maketitle
	
	\begin{abstract}
		\tcb{Natural wildfire becomes increasingly frequent as climate change evolves, posing a growing threat to power systems, while grid failures simultaneously fuel the most destructive wildfires. Preemptive de-energization of grid equipment is effective in mitigating grid-induced wildfires but may cause significant power outages during natural wildfires. This paper proposes a novel two-stage stochastic program for planning preemptive de-energization and solves it via an enhanced Lagrangian cut decomposition algorithm. We model wildfire events as stochastic disruptions with random magnitude and timing. The stochastic program maximizes the electricity delivered while proactively de-energizing components over multiple time periods to reduce wildfire risks. We use a cellular automaton process to sample grid failure and wildfire scenarios driven by realistic risk and environmental factors.} We test our method on an augmented version of the RTS-GLMC test case in Southern California and compare it with \tcb{four} benchmark cases, \tcb{including deterministic, wait-and-see, and robust optimization formulations as well as a comparison with prior wildfire risk optimization}. Our method reduces wildfire damage costs and load-shedding losses, and our nominal plan is robust against uncertainty perturbation.
	\end{abstract}
	
	\begin{IEEEkeywords}
		optimal power flow, stochastic mixed-integer programming, wildfire risk, de-energization, Lagrangian cut, decomposition algorithm.
	\end{IEEEkeywords}
	
	\nomenclature[A,01]{$\cB$}{set of buses;}
	\nomenclature[A,02]{$\cG$}{set of generators;}
	\nomenclature[A,03]{$\cL$}{set of transmission lines;}
	\nomenclature[A,04]{$\cD$}{set of load demand;}
	\nomenclature[A,05]{$\cC$}{set of load components, \(\cC = \cB \cup \cG \cup \cL\);}
	\nomenclature[A,06]{$\cT$}{set of time periods, \(\cT = \{1,2,\dots,T\}\);}
	\nomenclature[A,07]{$\Omega$}{index set of wildfire scenarios;}
	\nomenclature[A,08]{$I_c^\omega$}{set of affected components by component \(c\)'s ignition under scenario \(\omega \in \Omega\);}
	
	\nomenclature[B,01]{$D_{dt}$}{load \(d \in \cD\) at time period \(t \in \cT\);}
	\nomenclature[B,02]{$w_{d}$}{the priority level of load \(d \in \cD\);}
	\nomenclature[B,03]{$r_{c}$}{repair cost for component \(c \in \cC \);}  
	\nomenclature[B,04]{$\underbar{P}^G_g, \bar{P}^G_g$}{maximum and minimum generation limits of \(g \in \cG \);}  
	\nomenclature[B,05]{$W_{ij}$}{the thermal power flow limit of the line \((i,j) \in \cL \);}  
	\nomenclature[B,06]{$b_{ij}$}{the susceptance of the line \( (i,j) \in \cL \);}  
	\nomenclature[B,07]{$\underbar{\theta}, \bar{\theta}$}{the big-M values for voltage angle difference;} 
	\nomenclature[B,08]{$\tau^\omega$}{disruption time for scenario \(\omega \in \Omega\);}
	\nomenclature[B,09]{$u_{c}^\omega$}{1 if component \(c \in \cC \) faults under scenario \(\omega \in \Omega\), 0 otherwise;}
	\nomenclature[B,10]{$v_{c}^\omega$}{1 if component \(c \in \cC \) is shut off by exogenous fire under scenario \(\omega \in \Omega\), 0 otherwise;}

	\nomenclature[C,01]{$\theta_{it}$}{phase angle of the bus \(i \in \cB\) at time \(t \in \cT \);}
	\nomenclature[C,02]{$P^L_{ijt}$}{active power flow on the line \((i,j) \in \cL\) at time \(t \in \cT \);}
	\nomenclature[C,03]{$p^G_{gt}$}{active power generation at generator \(g \in \cG\) at time \(t \in \cT \);}
	\nomenclature[C,04]{$x_{dt}$}{percentage of demand satisfied at the load \(d \in \cD\) at time \(t \in \cT \);}
	\nomenclature[C,05]{$z_{ct}$}{$1$ if component $c\in \cC$ is functional at time $t \in \cT$, $0$ otherwise;}
	\nomenclature[C,06]{$y^\omega_{ct}$}{$1$ if component $c \in \cC$ is functional at time $t\in \cT$ under scenarios $\omega \in \Omega$, $0$ otherwise;}
	\nomenclature[C,07]{{$\eta^\omega_{ct}$}}{$1$ if a fire damage is incurred at component $c \in \cC$ at time $t \in \cT$ under scenarios $\omega  \in \Omega$, $0$ otherwise.}
	\printnomenclature
	
	\section{Introduction}
	{Wildfire-related disasters have become more frequent and severe in recent years across the United States~\cite{Muhs2020, Holmes2008}. A major aspect of the wildfire research focuses on wildfire mitigation plans in power systems~\cite{vazquez2022wildfire}. There is a mutual impact between wildfire ignition and power system operations: on the one hand, many catastrophic fires with significant loss of life and property were sparked by power infrastructures; for example, the Camp Fire in 2018, sparked by a transmission line, caused $84$ lives deaths and an estimated $\$9.3$ billion in residential property damage alone~\cite{2019wildfirerisk}; on the other hand, regardless of the source, wildfires lead to damages in power system components, which change the topology and resource availability within a power system~\cite{choobineh2015vulnerability}.}
	
	{Utility companies have considered approaches in different time scales to reduce the risk of wildfire ignitions~\cite{vazquez2022wildfire}. Long-term approaches can last for years, such as equipment inspection, hardening, and vegetation management~\cite{WildfireManagement}, may require enormous costs and qualified personnel and should be complemented with day-to-day operations such as public safety power shut-offs (PSPS)~\cite{PSPS} to eliminate risk. In this paper, we focus on optimizing the operational de-energization decisions over a short time horizon similar to the setup in~\cite{rhodes2020balancing}.}
	
	When optimizing the de-energization decisions, many previous works mainly focus on wildfire risks caused by power system components but overlook the simultaneous natural/man-made wildfires' damage on power systems~\cite{taylor2022framework,muhs2020wildfire, davoudi2021reclosing}. In addition, previous literature and industry practices take a deterministic approach because it is computationally challenging to solve optimization under uncertainty models with binary variables for de-energization, such as a wildfire hazard prevention system based on data mining techniques \cite{DataMiningPreventionWildfire}, risk-based optimization models for de-energization~\cite{rhodes2020balancing, bayani2022quantifying}, a data-driven wildfire decision-making framework for power system resilience~\cite{Hong2022data}, and industry practices using thresholds to indicate when to shut off~\cite{vazquez2022wildfire, huang2023review}. However, incorporating wildfire spread dynamics can significantly improve the performance of de-energization. Recently, more models have started to incorporate stochastic aspects of wildfire risks, such as a dynamic programming model to optimize a PSPS~\cite{DP_PSPS}, a two-stage robust optimization model for capacity expansion considering PSPS~\cite{bayani2023resilient}, a two-stage stochastic integer program solved by progressive hedging~\cite{zhou2023optimal}, and {rolling horizon optimization models for joint de-energization and restoration~\cite{Noah2022, kody2022sharing}}. That said, most of the literature still carries a relatively simple decision structure and cannot fully capture the temporal dynamics of the wildfire; for example, a commonly used uncertainty set in robust optimization, such as a budgeted uncertainty set~\cite{ke2022managing}, can only capture the number of damaged components, but is not able to accurately characterize the temporal effect of wildfire progress within a multi-period horizon. Therefore, it requires new studies to examine the interaction between power system operations with high-fidelity and dynamic wildfire progress~\cite{WildfireManagement2}.
	
	{To holistically analyze the two-way impact between the wildfire ignitions and the power system operations, we} classify wildfires into two categories, \textit{exogenous} and \textit{endogenous} fires: exogenous fires are external to the power system, while power system faults trigger endogenous fires. {Extreme weather conditions such as drought~\cite{Li2021SpatialAT} can lead to exogenous wildfires, which may cause damages in power system components; e.g., dysfunctional shunts~\cite{nagpal2018wildfire}, utility-scale converters~\cite{pico2019transient}, and line conductors~\cite{mitchell2013power}.} Endogenous fires are ignited when downed transmission lines come into contact with surrounding vegetation and can damage nearby power system components~\cite{rhodes2020balancing}. {A framework to assess endogenous wildfire risk is proposed in~\cite{Scott2013AWR}, and simulation models based on such risk measures can provide early warnings~\cite{8620983}. In this paper, we explicitly model both exogenous and endogenous wildfires' impact via binary parameters indicating whether either wildfire damages each power system component in every period.}
	
	Optimizing a sequential decision process that models shut-off and power flow decisions requires solving a multistage stochastic program. This can be challenging because the computational complexity increases exponentially with the planning horizon length~\cite{BirgLouv97}, and there may be integer variables involved~\cite{rhodes2020balancing}. We develop a two-stage stochastic mixed-integer programming (SMIP) model that captures the interaction between power system de-energization and both exogenous and endogenous wildfires over multiple periods. {We minimize our wildfire risk measure, the expected total cost that incorporates the wildfire damage cost and the load-shedding cost.} In addition, our two-stage SMIP model incorporates the spatial and temporal uncertainty of wildfires and represents real-world situations with disruption scenarios, which can handle complex and dynamic situations better and produce more resilient operations than deterministic models. We use a cellular automaton process to simulate wildfire spread and generate scenarios that describe the magnitude and timing of wildfire disruptions. Since wildfire occurs infrequently, we assume at most one wildfire occurrence during the planning horizon to reduce the multi-period problem into two stages~\cite{HaoxiangIJOC}. {This two-stage scenario-based setting fits the low-probability high-impact property of wildfire ignition and can capture the dynamics of wildfire progress with a high fidelity. We design a proactive scheme that executes an effective de-energization plan before the wildfire and optimizes power flow operations of the remaining available components afterward.}
	
	We propose a decomposition algorithm for {the two-stage} SMIP with non-convex and non-smooth value functions. Our algorithm uses linear cutting planes, named \emph{Lagrangian cuts}, to approximate the value functions~\cite{zou2016nested}. {The vanilla version of Lagrangian cuts is tight theoretically but can be inefficient computationally}. To overcome this limitation, we introduce an enhanced version of the Lagrangian cut, named \textit{square-minimization cut}, which improves the quality of the approximations and speeds up the algorithm. Our method shows high computational accuracy and reliability. The two-stage SMIP model supports the decision-making process under uncertainty, reduces the likelihood of a disaster, and maintains as much load delivery as possible after a disruption.

	The contributions of our paper are fourfold:
	{\begin{enumerate}
			\item We propose a cellular automaton model that simulates the onset and spread of exogenous and endogenous wildfires based on environmental data. This enables us to assess wildfire risk through scenarios used in the SMIP model.
			\item We formulate a two-stage SMIP model that manages wildfire risk in power system operations. Unlike the existing literature, our model captures the random nature of wildfire ignitions using disruption scenarios with random onset times to keep the model size manageable.
			\item We develop a cutting plane algorithm that solves the SMIP model with a finite convergence guarantee. We improve its computational performance significantly with a new square-minimization cut.
			\item We show our two-stage SMIP model yields significantly more resilient solutions \tcb{than its deterministic counterparts and more economical solutions than the robust optimization model.} We also analyze the solution under different parametric settings to gain operational insights.
	\end{enumerate}}
	The rest of the paper is organized as follows: Section~\ref{sec:model} presents the wildfire simulation model and the SMIP formulation; Section~\ref{sec:algorithm} derives the cutting plane algorithm and its computational enhancement; Section~\ref{sec:results} covers benchmark cases and numerical analyses; and Section~\ref{sec:conclusion} concludes the paper with a brief discussion of future work.

	\section{Two-Stage SMIP Model} \label{sec:model}
	We consider multi-period dispatch and de-energization operations under wildfire disruption over a short-term time horizon ($24$ hours). The power network is represented by a graph $(\cB, \cL)$, where $\cB$ is the set of buses and $\cL$ is the set of lines. We use $\mathcal{D}_i$, $\mathcal{G}_i$, and $\cL_i$ to represent the subset of loads, generators, and transmission lines connected to bus $i$, respectively. For each period $t \in \mathcal{T}$, we formulate DC power flow constraints to decide the active power generation \(P^G_{gt}\) of generator \(g \in \cG\), the power flow $P^L_{ijt}$ on transmission line \((i,j) \in \cL\) from bus \(i\) to bus \(j\), and the phase angle $\theta_{it}$ of bus \(i \in \cB\).

	In our model, we define wildfire risk as the combined cost of load-shedding and wildfire damage under uncertain wildfire disruption over the specified time horizon. In model~\eqref{prob:stage1}, a de-energized generator will have zero generation capacity, and a de-energized line will be considered open. If a bus is shut off, all generators and lines connected to it will also be de-energized. Our model uses binary decision variables $z_{ct}$ to represent whether a component $c \in \cC$ is de-energized at time period $t$. We assume that a de-energized component will remain off until the end of the time horizon due to safety considerations \cite{rhodes2020balancing}. Model~\eqref{prob:stage1} obtains a first-stage plan, i.e., a nominal plan, that should be implemented until a disruption occurs or the time horizon ends, whichever occurs first. If a fire disruption is observed at period $\tau^\omega$, given the current shut-off state $z_{\cdot \tau^\omega-1}$, the model enters the second stage and incurs the second-stage value function $f^\omega$, where we assume that the components at revealed ignition locations will be shut off.     
	
	\subsection{Wildfire Scenario Modeling} \label{subsec:scenario}
	On top of the constructed electric power network, we model wildfire uncertainty as a stochastic disruption with random occurrence timing, location, and spread. We consider a short-term operation and thus assume at most one disruption will occur over the time horizon. We form a portfolio of disruption scenarios indexed by $\omega \in \Omega$, within which we model i) endogenous fires, which are caused by faults of operating electric components, and ii) exogenous fires, which are caused by random natural wildfire ignitions. 
	
	We consider the smallest rectangular area that can cover the entire power network and divide it into a two-dimensional grid of a specified resolution, in which the set of cells is denoted by $\mathcal{K}$. We then construct a mapping between the grid cells and the buses and transmission lines based on their geographic locations. We let $k_i \in \mathcal{K}$ denote the cell where the bus $i$ is located. A transmission line, $l = (i,j)$, can span multiple cells, which is denoted by $\mathcal{K}_{ij}$. Furthermore, a cell $k \in \mathcal{K}$ can contain multiple transmission lines, and we denote the set of transmission lines passing cell $k$ by $\cL^k$. Finally, we let $\mathcal{K}_{\cB}$ and $\mathcal{K}_{\cL}$ be the collection of cells containing at least a bus and at least a transmission line, respectively.
	
	We employ the cellular automaton method, with propagation rules listed in~\cite{CellularAutomata}, to simulate the spread of {exogenous} wildfires. Overall, this setup leads to a wildfire disruption in approximately $95\%$ of scenarios. Each cell is characterized by four states which evolve in discrete time. The four states are as follows: (i) \textit{cell has no forest fuel; }(ii) \textit{cell has forest fuel that has not ignited;} (iii) \textit{cell has ignited forest fuel;} (iv) \textit{cell has fully burning forest fuel.} Endogenous disruptions are caused by power system component failures. The status of each component is normal, at fault, or ignited. We define the onset time of a wildfire disruption \(\tau^\omega\) as the earlier time between an exogenous wildfire occurrence and any electric component's fault. 

	\paragraph{Exogenous Wildfires} We assume that there are no exogenous wildfires in the grid at the start of the time horizon. For each cell $k \in \mathcal{K}$ in period $t \in \cT$, it receives information from the environment to update its state and interacts with surrounding cells {based on a set of defined rules: 
		\begin{enumerate}
			\item if its state in period $t-1$ is the unburned fuel state (ii), it has a probability $p_{tk}$, influenced by environmental factors, to be ignited and its state will transfer from (ii) to the ignited fuel state (iii);
			\item if its state in period $t-1$ is (iii), then the state will transfer to the fully burning state (iv);
			\item if it is a fully burning cell, the state will stay at (iv) and can spread flames to its $8$ adjacent cells indexed by $k'$, igniting them with probabilities $q_{tk'}$ if they are in status (ii), where parameters $q_{tk'}$ are subject to environmental factors like vegetation, ground elevation, and wind.
	\end{enumerate}}
	We record the exogenous wildfire damage after the end of time horizon $\cT$, letting $v_c^\omega = 1$ if a wildfire (state (iii) or (iv)) ever occurs on \(k_i\) if \(c\) is bus \(i\) or any generator connected to bus \(i\), or on any cell in \(\mathcal{K}_{ij}\), if \(c\) is transmission line \((i,j)\).    
	
	\paragraph{Endogenous Wildfires} We simulate endogenous wildfires using a two-step procedure:
	{\begin{enumerate}
			\item First, we simulate electrical component faults. To initialize the fault simulation process, we assume there is no fault component in the power system at the start of the time horizon, similar to the exogenous wildfire simulation. For every period \(t \in \cT\), a fault may occur at a component $c\in\cC$ following a Bernoulli distribution with probability $p_{tc}$. Once a component faults, we record the state by letting $u_c^\omega = 1$. 
			\item Our second step runs an independent cellular automaton process for each fault component \(c\) from the fault period to the end of the time horizon, initialized with the cell(s) that contains the component in state (iii). After the cell evolves to state (iv) in the next period, it will spread the simulated wildfire to the adjacent cell \(k'\) with a probability \(q_{t,k'}\) at time period \(t\) and further propagate. The components damaged by this spread in the time horizon form a set $I_c^\omega$. 
	\end{enumerate}}
	
	Fig.~\ref{fig: random set} {provides an illustration of a set of components affected by an endogenous fire: a fault occurs at an energized bus $i$, igniting an endogenous wildfire and impacting two transmission lines, a bus, and a generator (marked by dashed lines).} Notice that we assume a fault directly leads to a wildfire because we aim to simulate how an endogenous fire would spread if the component \(c\) were not de-energized. In addition, we build independent simulation processes, each starting with only the cell(s) containing the fault component \(c\), in order to clearly trace the spread of a wildfire to its origin and eliminate the interference of endogenous wildfires started elsewhere.
	\begin{figure}
		\centering
		\begin{tikzpicture}[scale = .7, square/.style={regular polygon,regular polygon sides=4},node distance={20mm}, thick, main/.style = {draw, circle}, mycircle/.style={
			circle,
			draw=black,
			fill=gray,
			fill opacity = 0.3,
			text opacity=1,
			inner sep=0pt,
			minimum size=20pt,
			font=\small},
		myarrow/.style={-Stealth},
		node distance=0.6cm and 1.2cm]
		

		\draw[ultra thick, red, ->] (8.5,6.5) arc (120:80:3) ;
		
		\node at (13,9) [mycircle,draw] (j1) {Bus $j_1$};
		\node at (9,5) [red, dashed, circle, draw] (i) {Bus $i$};
		\node at (12.5,4.5) [mycircle, dashed, blue, draw] (j3) {Bus $j_3$};
		\draw[dashed, blue, -] (i) -- (j1);
		\draw[-] (j1) -- (j3);
		\draw[dashed, blue, -] (i) -- (j3);
		
		\node at (7.5,4.5) [mycircle,draw] (g1) {$g_1$};
		\node at (10.5,5.5) [mycircle, dashed, blue, draw] (g2) {$g_2$};
		\draw[ -] (i) -- (g1);
		\draw[ blue, -] (i) -- (g2);
		
		\node at (13.8,7.8) [mycircle,draw] (g3) {$g_3$};
		\draw[ -] (j1) -- (g3);
		\node at (7.8,8.8) (T) {{$I_i^\omega = \{i,\  g_2,\ (i, j_1),\ (i, j_3),\ j_3\}$. }};
		\node at (9,7.3) (T) {{fire spread direction}};
		\end{tikzpicture}
		\caption{A random set $I_i^\omega$ illustration. {A fault occurs at bus $i$ (energized), which causes an endogenous wildfire to spread and affect two transmission lines} $(i,j_1)$ and $(i,j_2)$, bus $j_3$, and generator $g_2$, {which are marked by blue dashed lines}.}
		\label{fig: random set}
	\end{figure}
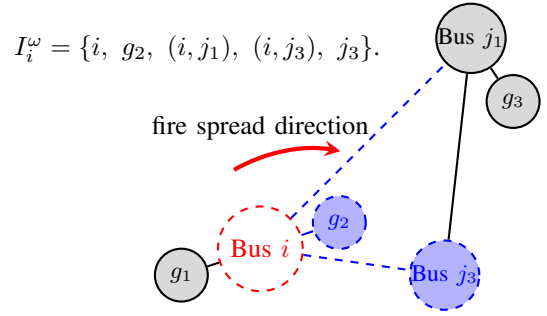 
	
	In summary, for a specific scenario $\omega \in \Omega$, we use a cellular automaton procedure to generate the wildfire disruption uncertainty \(\xi^\omega = \{\tau^\omega, v_{\cdot}^\omega, u_{\cdot}^\omega, I_{\cdot}^\omega\}\), which includes the disruption time \(\tau^\omega\), the binary parameters \(v_c^\omega\) indicating the exogenous fire locations, the binary parameters \(u_c^\omega\) indicating the fault locations, and the set \(I_c^\omega\) characterizing components damaged by the potential endogenous fire at component \(c\). We detail the parameter setup:
	\subsubsection*{Mapping onto Grid} 
	We convert the location of each bus $i\in\cB$ from latitude and longitude \cite{RTS-GLMC} to Universal Transverse Mercator(UTM) coordinates~\cite{maptools}. {The smallest rectangle that contains the entire power network has a length of $180,000$ and a width of $138,000$ UTM unit. We divide such a rectangular area into square cells with an edge length of \(1,000\) UTM units. Such edge length ensures a high resolution of our simulation model and is in the same order of magnitude as the average distance that a wildfire can spread within a time period.}
	
	\subsubsection*{Probability of Exogenous Wildfire Ignition $p_{tk}$}
	We assume that only wildfires that occur in cells with at least one power system component will threaten the electric power system. As stated in Section~\ref{subsec:scenario}, we model the ignition of exogenous wildfires at time \(t\) by a probability \(p_{tk}\), which describes the likelihood of a cell \(k \in \mathcal{K}\) transitioning from the state (ii), un-burned fuel, to state (iii), ignited the fuel.{To estimate \(p_{tk}\), we use the Wildland Fire Potential Index (WFPI)~\cite{WFPI} as a measure of the relative probability of ignition.}  The probability of an exogenous wildfire occurring in cell $k$ in which the set of all transmission lines passing through cell $k$ is $\cL^k$ is 
	\[p_{tk} = \dfrac{\sum_{l \in \cL^k}\texttt{WFPI}_l}{\sum_{l' \in \cL}{\texttt{WFPI}_{l'}}}.\]

	\subsubsection*{Probability of Wildfire Spread $q_{tk'}$} 
	Suppose there is a fully burning cell. Each of the adjacent cells $k' \in \mathcal{K}$ will be ignited with probability $q_{tk'}$. This probability \(q_{tk'}\) depends on: 1) a reference probability that cell $k'$ will be burning under no wind and flat terrain, $q_{0k'}$; 2) the vegetation type, $q^{\text {veg}}_{k'}$, and vegetation density, $q^{\text{den}}_{k'}$; 3) geographical information, $q^{s}_{k'}$; and (iv) wind speed and direction, $q^{w}_{tk'}$, as follows:
	\begin{equation*}
	q_{tk'}=q_{0k'}\left(1+q^{\text {veg}}_{k'}\right) \left(1 + q^{\text{den}}_{k'}\right)q^{s}_{k'} q^{w}_{tk'}.
	\end{equation*}
	{We refer to Ref.~\cite{CellularAutomata} and \cite{Alexandridis2008} for detailed parameter values.}

	\subsubsection*{Probability of Fault $p_{tc}$}     
	Transmission lines are more prone to failures than other electrical components due to environmental factors such as lightning and wind~\cite{PoissionRegression}. Therefore, in our implementation, we focus on the fault at transmission lines and set \(p_{tc} = 0, \forall c \in \cB \cup \cG\). This setup can be easily extended if we want to consider the risk of fault at buses and generators. In any time period, we assume a constant fault probability for each line $l \in \cL$ as \(p_{tl}=1-e^{-\bar{\lambda}}\), {where $\bar{\lambda}$ is the estimated hourly failure rate in the Poisson regression model of~\cite{PoissionRegression}.}

	\subsection{Two-Stage SMIP Model} \label{subsec:formulation}
	When a fault occurs at an energized component, the associated endogenous wildfire may incur high damage costs. To reduce the endogenous risk of igniting a wildfire, the power system operator can de-energize, i.e., shut off, power equipment. However, suppose multiple components are de-energized to prevent endogenous fires. In that case, the power supply capacity may be greatly reduced, resulting in the inability to meet crucial load demand and causing serious secondary disasters. To improve the reliability of the power system and mitigate wildfire damage, operators should de-energize some potentially dangerous electrical equipment under high-risk conditions to ensure enough power supply while significantly reducing risk. 
	
	To {achieve a balanced} risk trade-off between endogenous fire and load-shedding, we build a two-stage SMIP model~\eqref{prob:stage1} which optimizes shut-off decisions under wildfire disruption uncertainty over the given time horizon. In model~\eqref{prob:stage1}, a de-energized generator will have zero generation capacity, and a de-energized line will be considered open. If a bus is shut off, all generators and lines connected to it will also be de-energized. Our model uses binary decision variables $z_{ct}$ to represent whether a component $c \in \cC$ is de-energized at time period $t$. We assume that a de-energized component will remain off until the end of the time horizon due to safety considerations \cite{rhodes2020balancing}. Model~\eqref{prob:stage1} obtains a first-stage plan, i.e., a nominal plan, that should be implemented until a disruption occurs or the time horizon ends, whichever occurs first. If a fire disruption is observed at period $\tau^\omega$, given the current shut-off state $z_{\cdot \tau^\omega-1}$, the model enters the second stage and incurs the second-stage value function $f^\omega$, where we assume that the components at revealed ignition locations will be shut off.     
	\begin{subequations}
		\label{prob:stage1}
		\begin{align}
		& {Z^* = \min \ \sum_{\omega \in \Omega} p^\omega \left[ \sum_{t = 1}^{\tau^\omega-1} \sum_{d \in \cD} w_d (1-x_{dt}) + f^\omega(z_{\cdot \tau^\omega-1}, \xi^\omega) \right] }\notag \\
		& \st \quad \forall t \in \cT: \notag \\
		& P^L_{ijt} \leq -b_{ij} \left(\theta_{it} -\theta_{jt} + \bar{\theta} (1-z_{ijt})\right) \quad\quad\ \forall (i,j) \in \cL \label{eqn:pfconsl1}\\
		& P^L_{ijt} \geq -b_{ij} \left(\theta_{it} -\theta_{jt} + \underbar{\theta} (1-z_{ijt})\right) \quad \quad\ \forall (i,j) \in \cL \label{eqn:pfconsr1}\\
		& -W_{ij} z_{ijt} \leq P^L_{ijt} \leq W_{ij} z_{ijt} \quad \quad \quad \quad \quad \quad \forall (i,j) \in \cL \label{eqn:thermallimit1}\\
		& {\sum_{g \in \cG_i} P^G_{gt} + \sum_{l \in \cL_i} P^L_{lt} = \sum_{d \in \cD_i} D_{dt} x_{dt} \qquad\quad\quad\quad \forall i \in \cB} \label{eqn:flowbalance1}\\
		& \underbar{P}^G_g z_{gt} \leq P^G_{gt} \leq \bar{P}^G_g z_{gt} \quad\quad\quad\quad\quad\quad\quad\quad\quad\ \forall g \in \cG \label{eqn:genlimit1}\\
		& z_{it} \geq x_{dt} \quad\quad\quad\quad\quad\quad\quad\quad\quad\quad\quad\quad \; \forall i \in \cB, d \in \cD_i \label{eqn:loadlogic1}\\
		& z_{it} \geq z_{gt} \quad\quad\quad\quad\quad\quad\quad\quad\quad\quad\quad\quad\ \forall i \in \cB, g \in \cG_i \label{eqn:genlogic1}\\
		& z_{it} \geq z_{ijt} \quad\quad\quad\quad\quad\quad\quad\quad\quad\quad\quad \forall i \in \cB, (i,j) \in \cL \label{eqn:linelogic1}\\
		& z_{it} \geq z_{jit} \quad\quad\quad\quad\quad\quad\quad\quad\quad\quad\quad \forall i \in \cB, (j,i) \in \cL \label{eqn:linerevlogic1}\\
		& z_{ct} \geq z_{c \min\{t+1,T\}} \quad\quad\quad\quad\quad\quad\quad\quad\quad\quad\quad\ \forall c \in \cC \label{eqn:componenttime1} \\
		& z_{ct} \in \{0,1\}\quad\quad\quad\quad\quad\quad\quad\quad\quad\quad\quad\quad\quad\quad\ \forall c \in \cC.\label{eqn:binaryrestriction1}
		\end{align}
	\end{subequations}
	
	The objective function considers the expected cost, which consists of the load-shedding cost before the disruption and the total cost after the disruption. Constraints~\eqref{eqn:pfconsl1}-\eqref{eqn:genlimit1} correspond to the DC power flow model and constraints~\eqref{eqn:loadlogic1}-\eqref{eqn:linerevlogic1} model the component interactions, equivalent to constraint (7)-(9) in \cite{rhodes2020balancing}. Constraints~\eqref{eqn:componenttime1} describe the temporal logic of components' status: once a component is shut off, it stays off, and the corresponding \(z\) variable is $0$. 
	The scenario tree of this two-stage SMIP model is shown as Fig.~\ref{fig: scenario tree}, in which each node represents the decisions at a time period, and each branch extending to the right represents a disruption scenario. The {black} box of the left diagonal branch represents the nominal plan obtained if no disruption occurs. Suppose we have a scenario \(\omega\) with $\tau^\omega = 2$. The red box on the right describes the decisions associated with such value function \(f^\omega\){, and the red node implies the disruption time $\tau^\omega = 2$.} {The branches in the blue box are the scenarios that are disrupted in period $4$ and share the same historical information (the nodes in blue).}
	
	\begin{figure}
		\centering
		\begin{tikzpicture}[scale = .5, square/.style={regular polygon,regular polygon sides=4},node distance={20mm}, thick, main/.style = {draw, circle}, mycircle/.style={
			circle,
			draw=black,
			fill=gray,
			fill opacity = 0.3,
			text opacity=1,
			inner sep=0pt,
			minimum size=20pt,
			font=\small},
		myarrow/.style={-Stealth},
		node distance=0.6cm and 1.2cm]
		
		\node at (23.5,21) (Period) {Period};
		\node at (23.5,20) (Period1) {$1$};
		\node at (23.5,18.5) (Period2) {$2$};
		\node at (23.5,17) (Period3) {$3$};
		\node at (23.5,15.5) (Period4) {$4$};
		\node at (23.5,14) (Period5) {$5$};
		
		\node at (20.75, 16.25)  {$..$};
		
		\node at (17.75, 14.75)  {$..$};
		
		\node at (20,20) [circle, draw, blue] (R) {};
		
		\node at (21.5,18.5) [circle,draw, red] (11) {};
		\draw[-] (R) -- (11);
		\node at (18.5,18.5) [circle,draw, blue] (N1) {};
		\draw[-] (R) -- (N1);
		\node at (21.5,17) [circle,draw] (12) {};
		\draw[-] (11) -- (12);
		\node at (21.5,15.5) [circle,draw] (13) {};
		\draw[-] (12) -- (13);
		\node at (21.5,14) [circle,draw] (14) {};
		\draw[-] (13) -- (14);
		
		\node at (20,18.5) [circle, draw] (21) {};
		\draw[-] (R) -- (21);
		\node at (20,17) [circle, draw] (22) {};
		\draw[-] (21) -- (22);
		\node at (17,17) [circle, draw, blue] (N2) {};
		\draw[-] (N1) -- (N2);
		\node at (20,15.5) [circle, draw] (23) {};
		\draw[-] (23) -- (22);
		\node at (20,14) [circle, draw] (24) {};
		\draw[-] (23) -- (24);
		
		\node at (18.5,15.5) [circle,draw] (31) {};
		\draw[-] (N2) -- (31);
		\node at (15.5,15.5) [circle,draw] (N3) {};
		\draw[-] (N2) -- (N3);
		\node at (18.5,14) [circle,draw] (32) {};
		\draw[-] (31) -- (32);
		
		\node at (17,15.5) [circle,draw] (41) {};
		\draw[-] (N2) -- (41);
		\node at (17,14) [circle,draw] (42) {};
		\draw[-] (41) -- (42);
		\node at (14,14) [circle,draw] (N4) {};
		\node at (15.5,14) [circle,draw] (51) {};
		\draw[-] (N3) -- (51);
		\draw[-] (N3) -- (N4);
		
		\draw[-, dashed, black] (20.5,20) -- (14,13.5);
		\draw[-, dashed, black] (20,20.5) -- (13.5,14); 
		\draw[-, dashed, black] (20.5,20) -- (20,20.5);
		\draw[-, dashed, black] (13.5,14) -- (14,13.5);
		
		\draw[-, dotted, red] (22, 19) -- (22,13.5);
		\draw[-, dotted, red] (21, 19) -- (21,13.5);
		\draw[-, dotted, red] (21, 19) -- (22,19);
		\draw[-, dotted, red] (21, 13.5) -- (22,13.5);
		
		
		
		\draw[-, dotted, blue] (19, 16) -- (19,13.5);
		\draw[-, dotted, blue] (16.5, 16) -- (16.5,13.5);
		\draw[-, dotted, blue] (16.5, 16) -- (19,16);
		\draw[-, dotted, blue] (16.5, 13.5) -- (19,13.5);
		\node at (22.75,18.5){{$\overset{\tau^\omega}{\rightsquigarrow}$}};
		\end{tikzpicture}
		\caption{A scenario-tree illustration of the wildfire disruption problem with $T = 5$ and $\tau^\omega = 2$. {The branch in the red box represents the disruptive scenarios where the disruption occurs in period $2$. The branches in the blue boxes represent the scenarios sharing the same historical information (the nodes in blue).}}
		\label{fig: scenario tree}
	\end{figure}
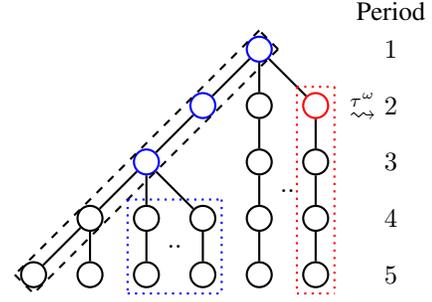   
	
	{We define the expected total cost after the disruption \(\sum_{\omega \in \Omega} p^\omega f^\omega(\cdot, \xi^\omega)\) as our wildfire risk measure. For each scenario, }{the second-stage value function \(f^\omega\) takes in the shut-off state variables $\hat{z}_{\cdot \tau^\omega-1}$ and the disruption uncertainty \(\xi^\omega\) as its input. It focuses on the operations after \(\tau^\omega\) and can be evaluated by solving the following mixed-integer program}:
	\begin{subequations}
		\label{prob:staget}
		\begin{align}
		& {f^\omega(\hat{z}_{\cdot \tau^{\omega}-1},\xi^\omega) = } \notag \\
		& \min \quad \sum_{t = \tau^\omega}^{T} \sum_{d \in \cD} w_d (1-x^\omega_{dt}) + \sum_{c \in \cC} r_c {\eta^\omega_{c}} \notag\\
		& \st \quad \forall t \in \{\tau^\omega,\dots, T\}: \notag \\
		& P^{L,\omega}_{ijt} \leq -b_{ij} \left(\theta^\omega_{it} -\theta^\omega_{jt} + \bar{\theta} (1-y^\omega_{ij}) \right) \qquad \forall (i,j) \in \cL\label{eqn:pfconslt}\\
		& P^{L,\omega}_{ijt} \geq -b_{ij} \left(\theta^\omega_{it} -\theta^\omega_{jt} + \underbar{\theta} (1-y^\omega_{ij})\right) \qquad \forall (i,j) \in \cL \label{eqn:pfconsrt}\\
		& -W_{ij} y^\omega_{ij} \leq P^{L,\omega}_{ijt} \leq W_{ij} y^\omega_{ij}\quad \quad \quad \quad \qquad \forall (i,j) \in \cL \label{eqn:thermallimitt}\\
		& \sum_{g \in \cG_i} P^{G,\omega}_{gt} + \sum_{l \in \cL_i} P^{L,\omega}_{lt} = \sum_{d \in \cD_i} D_{dt} x^\omega_{dt} \ \quad \qquad \forall i \in \cB \label{eqn:flowbalancet}\\
		& \underbar{P}^{G,\omega}_g y^\omega_{g} \leq P^{G,\omega}_{gt} \leq \bar{P}^{G,\omega}_g y^\omega_{g} \quad \quad \quad \quad \quad \quad \quad\ \ \forall g \in \cG \label{eqn:genlimitt}\\
		& y^\omega_i \geq x^\omega_{dt} \ \quad \quad \quad \quad \quad \quad \quad \quad \quad \quad \qquad \forall i \in \cB, d \in \cD_i \label{eqn:loadlogict}\\
		& y^\omega_i \geq y^\omega_g \ \quad \quad \quad \quad \quad \quad \quad \quad \quad \quad \qquad\ \forall i \in \cB, g \in \cG_i \label{eqn:genlogict}\\
		& y^\omega_i \geq y^\omega_{ij} \ \ \ \quad \quad \quad \quad \quad \quad \quad \quad \qquad\forall i \in \cB, (i,j) \in \cL \label{eqn:linelogict}\\
		& y^\omega_i \geq y^\omega_{ji} \ \ \ \quad \quad \quad \quad \quad \quad \quad \quad \qquad\forall i \in \cB, (j,i) \in \cL \label{eqn:linerevlogict}\\
		& y^\omega_c \leq z^\omega_c \ \quad\quad \quad \quad \quad \quad \quad \quad \quad \quad \quad \quad \quad \qquad\forall c \in \cC \label{eqn:operationlogict}\\
		& y^\omega_c \leq 1 - {\eta^\omega_{c}}\ \ \quad \quad \quad \quad \quad \quad \quad \quad \quad \quad \quad \qquad\forall c \in \cC \label{eqn:fireFunction} \\
		& {\eta^\omega_{c}} \geq v_c^\omega \ \quad \quad \quad \quad \quad \quad \quad \quad \quad \quad \quad \quad \quad \qquad\forall c \in \cC \label{eqn:exogenousFire} \\
		& {\eta^\omega_{k}} \geq u_c^\omega z^\omega_c \ \quad\quad \quad \quad \quad \quad \quad \quad \quad \qquad\forall c \in \cC, k \in I^\omega_{c} \label{eqn:ignitiont} \\
		& z^\omega_c = \hat{z}_{c \tau^\omega - 1} \ \qquad\quad\qquad\qquad \qquad\qquad \qquad \ \forall c \in \cC \label{eqn:nonanticipativity}\\
		& y^\omega_c, {\eta^\omega_{c}}, z^\omega_{c} \in \{0,1\}\ \ \quad\quad\quad\quad\quad\quad\quad\quad\quad\quad\ \forall c \in \cC. \label{eqn:binaryrestriction}
		\end{align}
	\end{subequations}
	The objective function includes the load-shedding cost after the disruption and the wildfire damage cost. The wildfire damage cost for a component \(c \in \cC\), denoted by \(r_c\), consists of the replacement cost of the electric components and the financial loss to the nearby communities. In model~\eqref{prob:staget}, the energization status will stay the same in the remaining time horizon, as we assume that all wildfire damages reveal at period \(\tau^\omega\) and no recovery decisions take place afterward. Constraints~\eqref{eqn:pfconslt}-\eqref{eqn:genlimitt} model an equivalent form of the DC power flow constraints as in model~\eqref{prob:stage1}. Constraints~\eqref{eqn:loadlogict}-\eqref{eqn:linerevlogict} indicate the functioning state of components, similar to their first-stage counterparts~\eqref{eqn:loadlogic1}-\eqref{eqn:linerevlogic1}. {We create a local copy of the first-stage shut-off decisions, \(z_c^\omega\), via the nonanticipativity constraint~\eqref{eqn:nonanticipativity}, which ensures that the shut-off state of each component at the time of wildfire disruption is identical for all scenarios that share the same history. The binding relationship is illustrated by Fig.~\ref{fig: scenario tree} and discussed in detail in~\cite{HaoxiangIJOC}.} With \(z_c^\omega\) indicating whether component \(c\) has been shut off, constraint~\eqref{eqn:operationlogict} makes sure that the shut-off components stay down through the second stage. Constraint~\eqref{eqn:fireFunction} states that damaged components no longer function, where we model the exogenous fire damage by constraint~\eqref{eqn:exogenousFire} and the endogenous fire damage by constraint~\eqref{eqn:ignitiont}. Notice that an endogenous fire started at component \(c\) requires both a fault occurrence \(u_c^\omega = 1\) and the component not being shut off \(z_c^\omega = 1\) and spreads to components \(k \in I_c^\omega\).

	\section{Decomposition Algorithm Based on Lagrangian Cut} \label{sec:algorithm}
	\noindent  The value function \(f^\omega\) is a non-convex function without an analytical form, as we need to solve a MIP subproblem~\eqref{prob:staget} to evaluate it. A common way to solve model~\eqref{prob:stage1} is to replace \(f^\omega\) with closed-form approximation, such as linear cutting planes. If the state variables are binary, we can generate Lagrangian cuts to equivalently reformulate \(f^\omega\) as a convex piecewise linear function { for each \(\omega \in \Omega\)}~\cite{zou2016nested}. Such Lagrangian cuts can be applied to model~\eqref{prob:stage1} as well because our state variables \(z\) are binary. In this section, we first show the derivation of the Lagrangian cut and the convergence properties of the cutting plane method. In addition, we propose an improved Lagrangian cut, \emph{square-minimization cut} (SMC), and illustrate how it can accelerate the cutting plane algorithm.
	
	At \(\ell\)-th iteration, we solve a Lagrangian dual problem~\eqref{prob:Lagrangian_Dual} to obtain the cut intercept \(v^{\omega, \ell}\) as its optimal value \(v^*\) and gradient \(\lambda^{\omega, \ell}\) as the optimal solution \(\lambda^*\). {It is worth noting that model \eqref{prob:Lagrangian_Dual} is a convex program and we can efficiently solve it using convex programming algorithms, such as the bundle method with stabilization techniques~\cite{lemarechal1995new}:}
	\begin{equation}
	v^* = \max_{\lambda} R^\omega(\hat{z},\lambda), \label{prob:Lagrangian_Dual}
	\end{equation}
	where the Lagrangian relaxation problem \(R^\omega(\hat{z},\lambda)\) is obtained by relaxing the nonanticipativity constraint~\eqref{eqn:nonanticipativity}: 
	\begin{align}
	R^\omega(\hat{z},\lambda) = \min \quad & \sum_{t = \tau^\omega}^T \sum_{d\in\mathcal{D}}w_d(1-x_{dt}^\omega) + \notag \\
	& \sum_{c\in\mathcal{C}} [r_c {\eta^\omega_{c}} + {\lambda_c (\hat{z}_{c, \tau^\omega - 1} - z_c^\omega)}]\\
	\st \quad & \text{Constraints }\eqref{eqn:pfconslt} - \eqref{eqn:ignitiont}\tcb{, \eqref{eqn:binaryrestriction}}. \notag
	\end{align}  
	
	Since we can always find a feasible solution to subproblem~\eqref{prob:staget} by setting all variables in model~\eqref{prob:staget} to zero, we do not need to generate feasibility cuts to characterize the domain of \(f^\omega\) and can focus on generating optimality cuts to approximate \(f^\omega\)~\cite{BirgLouv97}. For a specific $\omega$, we denote the lower approximation of $f^\omega$ by $V^\omega$, and write the $\ell$-th cut as follows:
	\begin{equation}
	V^\omega \geq v^{\omega, \ell} + (\lambda^{\omega, \ell})^\top (z_{\tau^\omega - 1} - \hat{z}^\ell_{\tau^\omega - 1}). \label{eqn:cuts}
	\end{equation}
	With construction of \tcb{\(L-1\)} Lagrangian cuts~\eqref{eqn:cuts}, we can substitute the function \(f^\omega\) by \(\) to obtain the lower approximation for model~\eqref{prob:stage1} as $(M_L)$. Since the cuts form a lower approximation of \(f^\omega\), the optimal value $Z^*_L$ is a lower bound of $Z^*$.
	\begin{align}
	(M_L) \quad Z_L^* =  \min\quad & \sum_{\omega\in\Omega} {p^\omega} \left[ \sum^{\tau^\omega - 1}_{t = 1} w_d(1 - x_{dt}) + V^\omega\right]  \\
	\mbox{s.t.}\quad & \text{Constraints }\eqref{eqn:pfconsl1} - \eqref{eqn:componenttime1} \notag\\
	& V^\omega \geq (\lambda^{\omega, \ell})^\top (z_{\cdot \tau^\omega - 1} - \hat{z}^\ell_{\cdot \tau^\omega - 1}) + \notag \\
	& \quad\quad v^{\omega, \ell}, \quad \forall \, \omega \in \Omega, \ell = 1, \dots, \tcb{L-1}. \notag
	\end{align}
	Once we obtain the optimal solution \((\hat{x},\hat{z},\hat{V},\hat{P})\) to \((M_L)\), it is a feasible solution to model~\eqref{prob:stage1} and we can calculate an upper bound by evaluating \(f^\omega(\hat{z}_{\cdot \tau^\omega - 1}, \xi^\omega)\) for each \(\omega \in \Omega\). We present the detailed steps of the decomposition algorithm in Algorithm~\ref{alg:decomposition} below, which iteratively updates the bounds. In the end, we terminate Algorithm~\ref{alg:decomposition} once the relative gap is within a predefined tolerance threshold \(\epsilon \geq 0\). We can show that Algorithm~\ref{alg:decomposition} converges to the optimal value in finite steps. 
	\begin{algorithm2e}
		\small
		\SetAlgoLined
		Initialization cut iteration number $\ell = \tcb{1}$, lower bound $LB = 0$, upper bound $UB = \infty$ and $\epsilon \geq 0$\; 
		\While{$\frac{UB-LB}{UB} \geq \epsilon$}{
			\tcc{Forward Steps}
			Solve the master problem $(M_\ell)$ and obtain the first-stage shut-off solution $\hat{z}^\ell$, optimal value $Z_\ell^*$, and the approximations of value function $\hat{V}^{\omega, \ell}$ \;
			Update $LB = Z_\ell^*$\;
			\For{$\omega \in \Omega$}{
				Solve model~\eqref{prob:staget} with \(\hat{z}^\ell\) and obtain $f^\omega(\hat{z}^\ell_{\cdot \tau^\omega}, \xi^\omega)$\;
			}
			Let $\bar{Z} = Z_\ell^*  + \sum_{\omega \in \Omega}p^\omega (f^\omega(\hat{z}^\ell_{\cdot \tau^\omega}, \xi^\omega) - \hat{V}^{\omega, \ell}$) \;
			\If{$\bar{Z} < UB$}{
				Update $UB = \bar{Z}$ and incumbent solution as $z^* = \hat{z}^\ell$\;
			}
			\tcc{Backward Steps}
			\For{$\omega \in \Omega$} {
				Solve the Lagrangian dual problem~\eqref{prob:Lagrangian_Dual}, and obtain the optimal solution $\lambda^{\omega, \ell}$ and the optimal value $v^{\omega, \ell} = R^\omega(\hat{z}^\ell, \lambda^{\omega, \ell})$\; 
				And add cut~\eqref{eqn:cuts} to $(M_\ell)$\;
			}
			
			
			Let $\ell = \ell + 1$\;
		}
		Output the $\epsilon$-optimal value $UB$ and solution $z^*$.
		\caption{Decomposition algorithm based on Lagrangian cuts to solve model~\eqref{prob:stage1}}\label{alg:decomposition}
	\end{algorithm2e}
	\begin{theorem}[Convergence] 
		When \(\epsilon = 0\), Algorithm~\ref{alg:decomposition} terminates in a finite number of iterations and outputs an optimal solution to model~\eqref{prob:stage1} after finitely many iterations $L$.
	\end{theorem}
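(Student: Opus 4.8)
The plan is to combine three ingredients: the validity of the Lagrangian cuts as global underestimators of each $f^\omega$, the exactness (tightness) of a cut at the binary point where it is generated, and a finiteness argument over the finite set of binary first-stage solutions. The bound-monotonicity machinery of Algorithm~\ref{alg:decomposition} then converts a repeated binary solution into termination.

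First I would establish that every cut~\eqref{eqn:cuts} is a valid lower approximation of $f^\omega$. Because $\hat z$ enters the Lagrangian $R^\omega(\hat z,\lambda)$ only through the linear term $\lambda_c(\hat z_{c,\tau^\omega-1}-z_c^\omega)$, we have the affine identity $R^\omega(z,\lambda)=R^\omega(\hat z,\lambda)+\lambda^\top(z-\hat z)$, and weak Lagrangian duality gives $R^\omega(z,\lambda)\le f^\omega(z)$ for every feasible $z$. Hence the affine right-hand side of~\eqref{eqn:cuts} underestimates $f^\omega$ everywhere, so $Z_\ell^*\le Z^*$ at every iteration, i.e. $LB$ is always a true lower bound; dually, $\bar Z$ equals the exact objective of model~\eqref{prob:stage1} at the feasible first-stage solution $\hat z^\ell$ and is therefore a true upper bound $\bar Z\ge Z^*$.

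The key step, and the main obstacle, is the exactness of the cut at binary states: when $\hat z^\ell_{\cdot\tau^\omega-1}$ is binary, the Lagrangian dual~\eqref{prob:Lagrangian_Dual} has zero duality gap, so $v^{\omega,\ell}=\max_\lambda R^\omega(\hat z^\ell,\lambda)=f^\omega(\hat z^\ell_{\cdot\tau^\omega-1},\xi^\omega)$. This is exactly the binary-state property of Lagrangian cuts established in~\cite{zou2016nested}, and it applies here because the state variables $z$ are binary by~\eqref{eqn:binaryrestriction1}. Evaluating~\eqref{eqn:cuts} at $z=\hat z^\ell_{\cdot\tau^\omega-1}$ then returns precisely $f^\omega(\hat z^\ell_{\cdot\tau^\omega-1},\xi^\omega)$, so the cut is tight at the point that generated it. I expect the delicate part to be verifying that the hypotheses of the cited result hold for our formulation, in particular that relative completeness is satisfied (the all-zero second-stage solution is always feasible to~\eqref{prob:staget}) and that the damage-logic constraints~\eqref{eqn:exogenousFire}--\eqref{eqn:ignitiont} do not disturb the zero-gap argument, but this reduces to checking the conditions of~\cite{zou2016nested} rather than reproving it.

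Finally I would close with a finiteness argument. The first-stage shut-off vector $\hat z^\ell$ lies in the finite set $\{0,1\}^{|\cC|\cdot T}$, so if the algorithm never terminated, the pigeonhole principle would force a repeat: there exist iterations $\ell_1<\ell_2$ with $\hat z^{\ell_1}=\hat z^{\ell_2}=:\hat z^*$. For each $\omega$ the tight cut added at $\ell_1$ is still present at $\ell_2$, so the master approximation obeys $\hat V^{\omega,\ell_2}\ge f^\omega(\hat z^*_{\cdot\tau^\omega-1},\xi^\omega)$; together with the lower-bound property $\hat V^{\omega,\ell_2}\le f^\omega(\hat z^*_{\cdot\tau^\omega-1},\xi^\omega)$ this yields equality. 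Substituting into the upper-bound update gives $\bar Z=Z_{\ell_2}^*+\sum_{\omega\in\Omega}p^\omega\bigl(f^\omega(\hat z^*_{\cdot\tau^\omega-1},\xi^\omega)-\hat V^{\omega,\ell_2}\bigr)=Z_{\ell_2}^*=LB$, which forces $UB=LB$ and triggers termination when $\epsilon=0$, contradicting non-termination. Hence the algorithm halts after finitely many iterations $L$, and at termination the sandwich $LB=Z_L^*\le Z^*\le\bar Z=UB$ collapses, so the incumbent $z^*$ is an optimal solution to model~\eqref{prob:stage1}.
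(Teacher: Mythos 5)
Your proof is correct and follows essentially the same route as the paper's: both rest on the tightness of Lagrangian cuts at binary incumbents (Theorem 3 of~\cite{zou2016nested}) combined with the finiteness of the binary first-stage set $\{0,1\}^{|\cC|\cdot T}$, so that a repeated (or exhausted) incumbent forces $UB=LB$ and termination at an optimum. The differences are presentational rather than substantive: your pigeonhole argument on a repeated incumbent replaces the paper's ``no new cuts are generated after iteration $k_1$'' step, and your explicit verification of cut validity via weak Lagrangian duality and of relative complete recourse (the all-zero second-stage solution) makes precise what the paper leaves implicit.
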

	\begin{proof}
		{According to Theorem $3$ in Ref.~\cite{zou2016nested}, for a given incumbent first-stage solution $\hat{z}$ and scenario $\omega$, the Lagrangian cut generated at $\hat{z}$ is proven to be tight, meaning that its value at $\hat{z}$ is equal to the value function $f^\omega(\hat{z}_{\cdot, \tau^\omega - 1}, \xi^\omega)$ at this particular point. Since all state variables are binary, there is a finite number of distinct first-stage solutions $z$, allowing us to fully describe the value function using a finite number of Lagrangian cuts. Consequently, there exists a finite iteration number $k_1$, after which no new cuts are generated. For any iteration $k > k_1$, with the resulting first-stage state variable $\hat{z}^k$ and approximate second-stage value function values $\{V^{\omega, k}\}_{\omega \in \Omega}$, two cases may arise: 1) for each scenario $\omega$, $V^{\omega,k} = f^\omega(\hat{z}^k_{\cdot, \tau^\omega - 1}, \xi^\omega)$; in this scenario, the resulting upper bound \(\bar{Z}\) equals the optimal value of the master problem \(Z^*_k\), leading the algorithm to terminate with a global optimum; 2) there exists a scenario $\omega$ such that $V^\omega < f^\omega(\hat{z}^k_{\cdot, \tau^\omega - 1}, \xi^\omega)$; in this case, a Lagrangian cut is generated to enhance the approximation of the second-stage value function. However, this contradicts the result that no new cuts are generated after iteration $k_1$. Consequently, the algorithm converges to a global optimum in a finite number of steps.}
	\end{proof}
	
	Although the Lagrangian cut~\eqref{eqn:cuts} is tight and valid at the incumbent binary solution \(\hat{z}\) and results in finite convergence, the cut may be steep and not provide a good lower approximation at other solutions (shown as the black line in Fig.~\ref{fig: enhanced Lagrangian cut}). Therefore, We propose a rotated cut, the SMC, which is illustrated by the red line in Fig.~\ref{fig: enhanced Lagrangian cut}. Instead of solving model~\eqref{prob:Lagrangian_Dual} to obtain the cut coefficients \(\lambda\) and \(v\), we solve an alternative model:
	\begin{subequations}\label{prob: enhancement techniques}
		\begin{align}
		\min_{\lambda}\quad & \lambda^\top \lambda \\
		\mbox{s.t.}\quad & R^\omega(\hat{z},\lambda) \geq (1 - \delta)f^\omega(\hat{z}_{\cdot\tau^\omega -1 }, \xi^\omega), \label{eqn:smc_cons}
		\end{align}
	\end{subequations}
	and we let \(\lambda^{\omega,\ell}\) equal to its optimal solution \(\lambda^*\) and \(v^{\omega,\ell} = R^{\omega}(\hat{z},\lambda^*)\). Fig.~\ref{fig: enhanced Lagrangian cut} shows that the difference between a steep cut and a flat cut lies in the angle they make with the horizontal plane. We prefer a flat cut with a smaller \(\lambda^\top \lambda\), as \(\lambda\) represents the linear cut's coefficients. While we minimize \(\lambda^\top \lambda\) and rotate the cut, it is possible that the cut becomes non-tight at the incumbent solution \(\hat{z}\). Therefore, we set up constraint~\eqref{eqn:smc_cons} to force the cut value to be within a \(\delta\) neighborhood of \(f^\omega\) at \(\hat{z}\), which can be considered an ``anchor point." Model~\eqref{prob: enhancement techniques} is always feasible as the Lagrangian cut coefficients obtained from solving model~\eqref{prob:Lagrangian_Dual} can serve as a feasible solution. {As the function \(R^\omega\) is a concave function of \(\lambda\) given \(\hat{z}\), constraint~\eqref{eqn:smc_cons} characterizes a convex set and we can again use convex programming solution methods to solve model~\eqref{prob: enhancement techniques}.}
	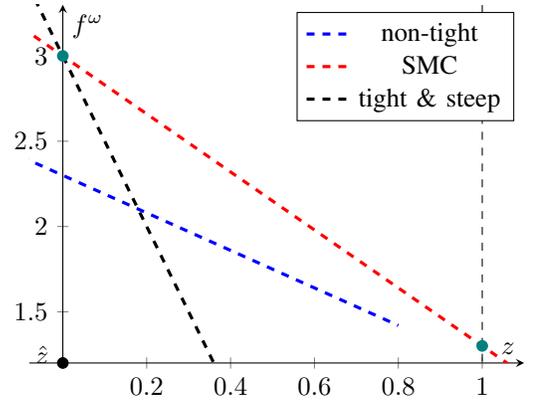
\begin{figure}[h]
		\begin{center}
			\begin{tikzpicture}
			\begin{axis}[
			axis lines = middle,
			xlabel = {$z$},
			ylabel = {$f^\omega$},
			xmin = -.08, xmax = 1.1,
			ymin = 1.2, ymax = 3.3,
			major grid style = {lightgray},
			minor grid style = {lightgray!25},
			width = .45\textwidth,
			height = 0.35\textwidth]
			\addplot[name path = Cut1, 
			domain = -0.1:.8,
			dashed,
			very thick,
			blue,]{ 2.3 - 1.1 * x};
			\addplot[name path = Cut2, 
			domain = -0.1:1.1,
			dashed,
			very thick,
			red,]{ 3 - 1.7 * x};
			\addplot[name path = Cut3, 
			domain = -0.1:.8,
			dashed,
			very thick,
			black,]{ 3.0 - 5 * x};
			\addplot[mark=*,only marks, black] coordinates {(0, 1.2) };
			\addplot[mark=*,only marks, teal] coordinates {(0,3) (1,1.3)};
			\node[black] at (-.05,1.25) {$\hat{z}$};
			\draw [dashed] (axis cs:{1.0,0}) -- (axis cs:{1.0,3.5});
			\legend{non-tight , SMC, tight \& steep}
			\end{axis}
			\end{tikzpicture}
		\end{center}
		\caption{Illustration of different types of cuts: the black cut and the red cut are tight at \(\hat{z} = 0\), but the blue cut is not since its value at $\hat{z}$ is smaller than the function value; the red cut has better performance than the black cut as it also characterizes the function value exactly at another \(\hat{z} = 1\).}\label{fig: enhanced Lagrangian cut}
	\end{figure}

	\section{Numerical Results} \label{sec:results}
	\noindent In this section, we first describe the numerical experiment setup, with which we evaluate the efficiency of our decomposition method and the solution quality. We then compare our two-stage SMIP model~\eqref{prob:stage1} with \tcb{four} benchmark cases: a deterministic model that ignores wildfire disruption, a wait-and-see model with perfect wildfire information, a model based on an aggregate wildfire risk metric, \tcb{and a robust optimization model that seeks to minimize the total cost of the worst-case scenario.} Finally, we examine the robustness of our nominal solution against potential inaccuracy in wildfire disruption probability. 
	
	\subsection{Experiment Setup}
	We use the RTS-GMLC $73$-bus case~\cite{RTS-GLMC} and run experiments on a $24$-hour horizon with each time period accounting for an hour (\(T = 24\)). We assume a constant electricity demand except for the peak hours, $9$ am to $12$ pm and $3$ pm to $7$ pm, during which the demand is $1.2$ times the non-peak value. 
	
	The economic loss caused by fire and sudden power outages depends on the fire intensity and the amount of load in the affected area. To quantify the economic impact of wildfire damage and load loss, we assign each electrical component a numerical rating based on its importance and impact on the area, following a similar approach to the \textit{Value Response Index} in Texas Wildfire Risk Rating System~\cite{TWRAS}. 
	{We assign each electrical component a scaled cost based on value data from \cite{NuclearPower}, \cite{WindPower}, \cite{NuclearFinancing}, and \cite{TransmissionCost} to create comparable cost values for equipment damage and shed load. We obtain the following cost parameters: }  
	\begin{enumerate} 
		\item Load priorities \(w_d\) range from $50$ to $1000$; 
		\item The damage costs \(r_c\) of wind turbines, thermal and nuclear power plants are $50$, $1000$, and $2500$, respectively; 
		\item The damage cost \(r_c\) of each bus is $50$; 
		\item The damage cost \(r_c\) of a transmission line is $0.285 \ell$, where $\ell$ is the length of the transmission line. 
	\end{enumerate}
	Fig.~\ref{fig: deenergize_18} color codes the load and generator priority levels based on their load-shedding and damage costs, respectively. 
	\begin{figure}
		\centering
		\setkeys{Gin}{width=.85\linewidth}
		\includegraphics{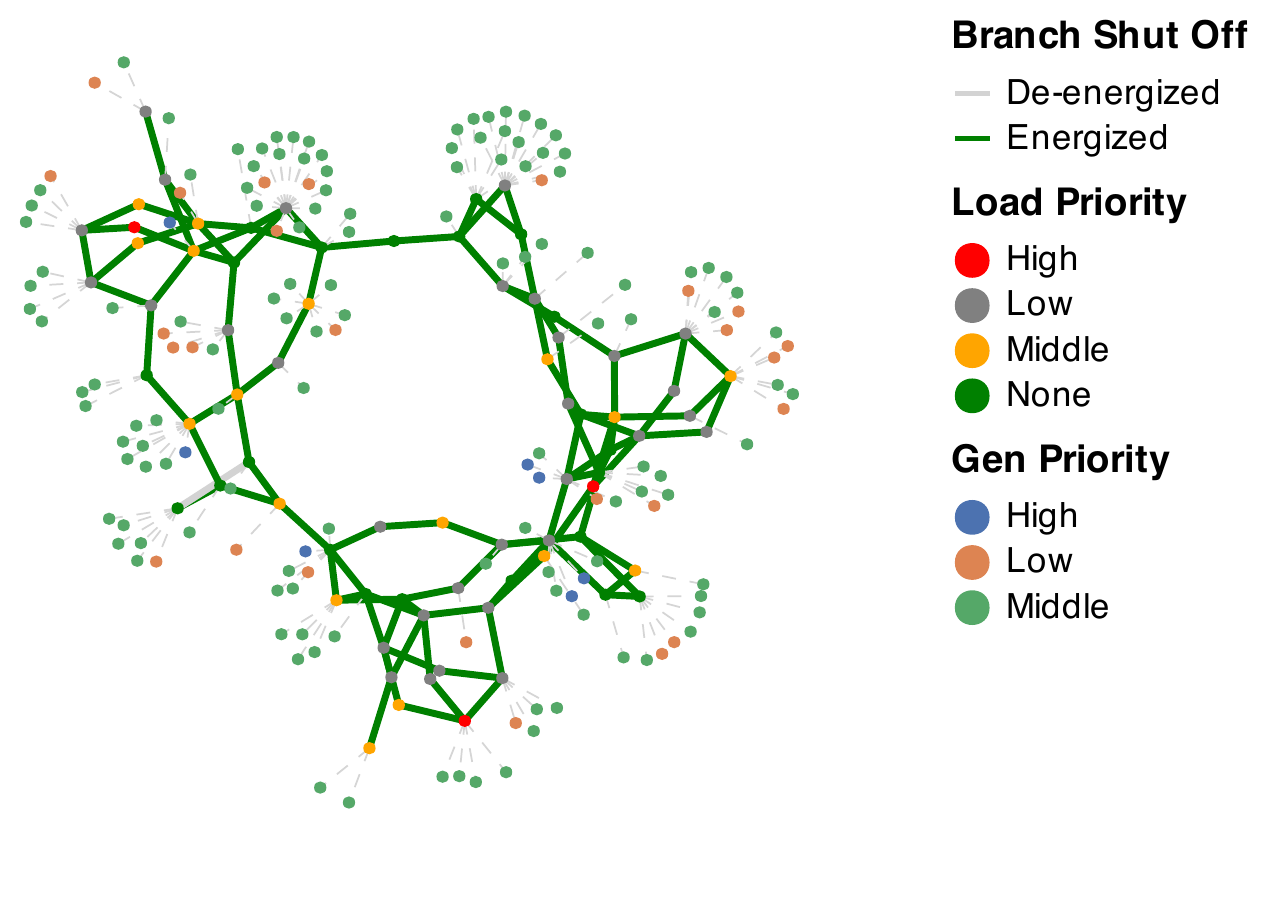}
		\caption{Illustration of the RTS-GMLC system.}
		\label{fig: deenergize_18}
	\end{figure}
	
	{All optimization models were implemented using JuMP~\cite{JuMP} in Julia v1.6~\cite{Julia} and solved by Gurobi v9.5.1 \cite{gurobi} on a computer with a 10-core M1 Pro CPU and $32$GB memory. The network plots are generated using PowerPlot.jl, which depends on PowerModels.jl package \cite{PowerModels}. Scenario simulation is constructed by an agent-based model package, Agent.jl \cite{Agents}. For Algorithm~\ref{alg:decomposition} and SMC, we set $\epsilon = 1\%$ and $\delta = 10^{-4}$.}
	
	\subsection{Computational Performance and Solution Analysis} \label{subsec:computational}
	We examine how the algorithm performance and solution quality change with sample size. 
	Our two-stage SMIP model, utilizing a set of scenarios \(\Omega\), can be considered sample average approximation (SAA) and serve as a lower bound estimator of the optimal value for the true stochastic process of wildfire progress. As the number of scenarios increases, the approximation precision improves, but the required computational effort also increases~\cite{BirgLouv97}. Our objective is to determine a sample size that yields a high-quality solution within an acceptable computational budget. To achieve this, we first obtain twenty first-stage SAA solutions \(\hat{X} = \{\hat{x},\hat{z},\hat{\theta},\hat{P}\}\) and optimal values with sample sizes of $20, 50, 100, 200$, and $500$.  
	
	Next, we generate \(n = 5,000\) wildfire scenarios \(\tilde{\Omega}\) using the procedure outlined in Section~\ref{subsec:scenario} for the out-of-sample test. We evaluate the expected total cost of each SAA solution using those $5,000$ scenarios as follows:
	
	\vspace{-0.3cm}
	
	{\small
		\begin{align}
		& g_n(\hat{X}) =  \notag \\
		& \min \ \sum_{\omega \in \tilde{\Omega}} \frac{1}{n} \left[ \sum_{t = 1}^{\tau^\omega-1} \sum_{d \in \cD} w_d (1-\hat{x}_{dt}) + f^\omega(\hat{z}_{\cdot \tau^\omega-1}, \xi^\omega) \right]. \label{eqn:obj}
		\end{align}
	}
	Such cost can serve as an upper-bound estimator for the true wildfire progress. We take the mean of those twenty lower- and upper-bound estimators and calculate their $95\%$ confidence intervals for each sample size, shown in Fig.~\ref{fig: CI}. As the figure illustrates, the gap between the lower- and upper-bound estimators narrows, and both estimators become less variable with increasing sample size. We consider this gap acceptable for a sample size of 500. 
	
	{As the sample size increases, solving the SMIP model \eqref{prob:stage1} becomes increasingly challenging. Table~\ref{table: time comparison} shows the effectiveness of Algorithm \ref{alg:decomposition} with SMC. With a sample size of $500$, this algorithm is capable of solving the model within a reasonable time frame ($18,000$ seconds). In contrast, state-of-the-art solvers like Gurobi are unable to handle such a problem. Table~\ref{table: time comparison} highlights the necessity of computational improvement through the use of SMC, as the algorithm with the original Lagrangian cuts (LC) fails to reach the optimal solution within the given time constraint. }
	
	{We simulate scenarios and solve the two-stage SMIP offline in advance of a specific day of operations, which provides a nominal plan. During the day, we execute the nominal plan until a wildfire disruption occurs. \tcb{Given the wildfire disruption realization}, we can solve the second-stage problem to obtain recourse decisions, a process that typically takes less than a second. Therefore, the run-time of Algorithm~\ref{alg:decomposition} is sufficient for the practical implementation of our model.}
	
	\begin{figure}
		\centering
		\setkeys{Gin}{width=.85\linewidth}
		\includegraphics{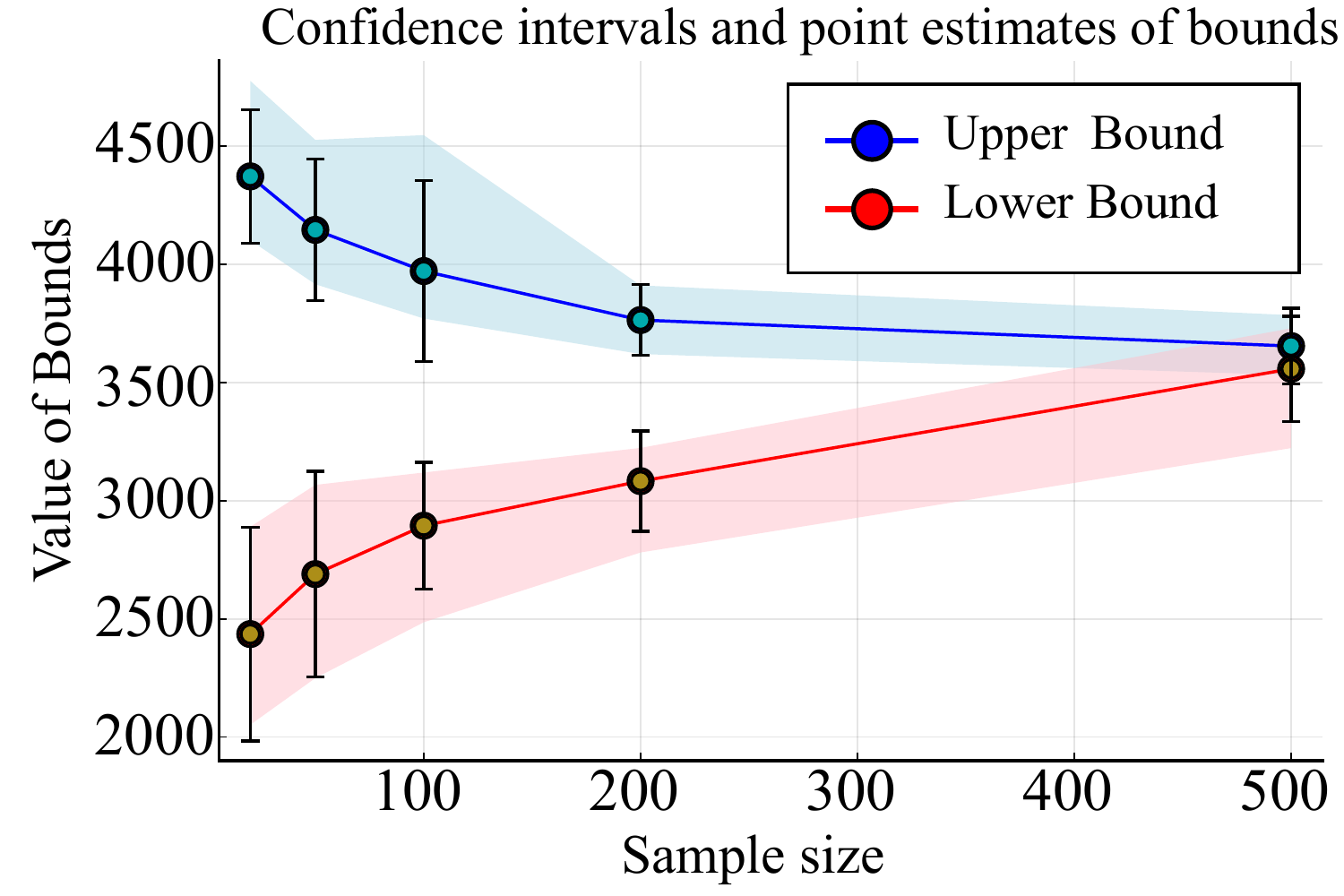}
		\caption{Confidence intervals (bar) and point estimates (circle) of the lower and upper bounds for solutions with different sample sizes. The shaded area indicates the range of extreme values (maximum and minimum).}
		\label{fig: CI}
	\end{figure}
	
	\begin{table}[h]
		\centering
		\caption{Run-time (sec.) of Algorithm~\ref{alg:decomposition} utilizing SMC and LC with a tolerance of $1.0\%$, compared with a run-time of solving model~\eqref{prob:stage1} by Gurobi with MIPGap $= 1\%$.}
		\begin{tabular}{c|cc|c}\toprule[1pt]\hline
			\multirow{2}{*}{Sample size} & \multicolumn{2}{c|}{Algorithm~\ref{alg:decomposition} Time}  & Gurobi Time  \\ 
			& SMC (Sec.)              & LC (Sec.)                & (Sec.)  \\ 
			\cline{1-4} 
			$20$        & $712$             & $3287$             & $84$                      \\
			$50$        & $1248$            & $8741$             & $376$                     \\
			$100$       & $2604$            & $17911$            & $1510$                    \\
			$200$       & $5323$            & $>18000$           & $9186$                    \\
			$500$       & $12776$           & $>18000$           & $>18000$                  \\ \hline  
			\bottomrule[1pt]    
		\end{tabular}
		\label{table: time comparison}
	\end{table}

	The nominal plan obtained by model~\eqref{prob:stage1} balances the trade-off between minimizing load-shedding cost and reducing wildfire risk. For example, transmission line $72$ has a high risk of starting an endogenous wildfire, as it ranks {$10$-th} among all transmission lines regarding the number of scenarios where {an endogenous wildfire occurs. However, in a $24$ time-period horizon, the nominal plan does not de-energize it early }because it connects to a bus with multiple generators that supply a large amount of load; see Fig.~\ref{fig: nominal_pan_12}. To illustrate this trade-off, we compare the outcomes of de-energizing and keeping line $72$ energized. If we \textit{intentionally} de-energize line $72$ early at \(t = 12\), it causes a significant loss of generation and load-shed; see Fig.~\ref{fig: nominal_plan_controlled}. The load-shedding cost outweighs the expected damage cost caused by line $72$, so the nominal plan prefers to keep it energized. However, after period $22$, the nominal plan de-energizes it (Fig.~\ref{fig: nominal_plan_22}) because the expected damage cost from its fault exceeds the load-shedding cost from less generation. {We observe a similar ``end-of-horizon" effect when we extend the time horizon length to \(T = 36\) and \(T = 48\).}
	
	\begin{figure*}
		\centering
		\setkeys{Gin}{width=\linewidth}
		\begin{subfigure}{0.32\textwidth}
			\includegraphics{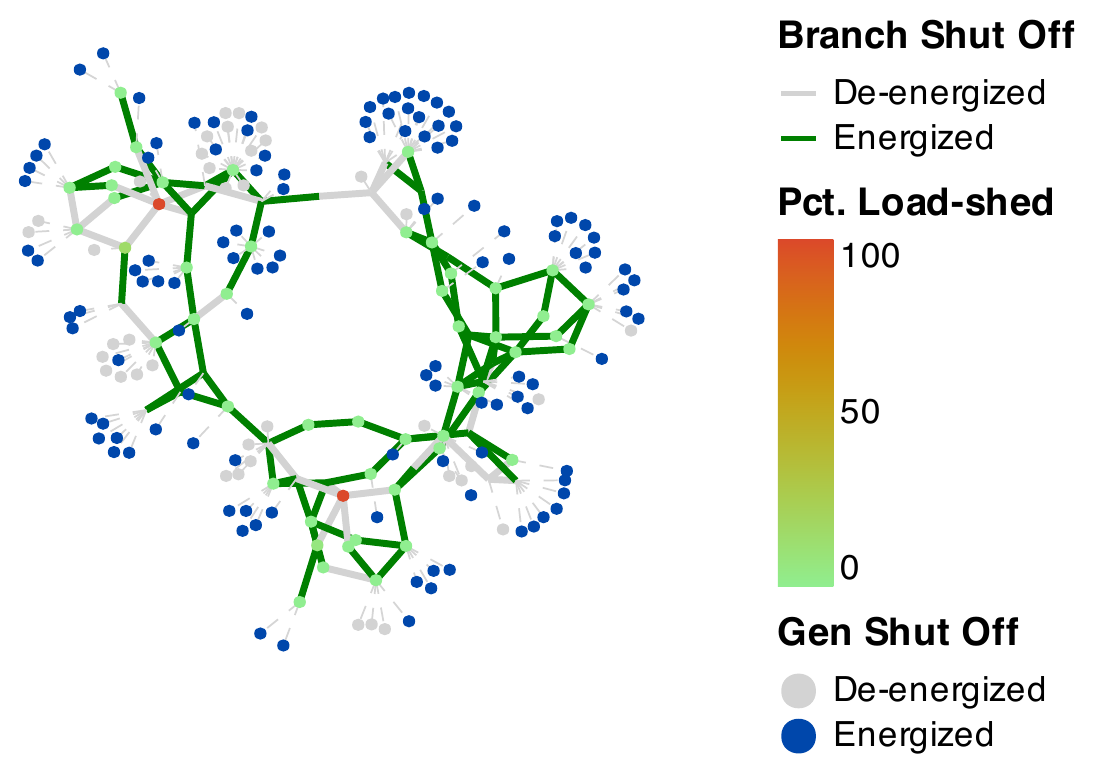}
			\caption{The nominal load plan for the period $12$ and its associated load-shedding percentage illustrates how the nominal plan optimizes the load distribution and minimizes the risk exposure under uncertainty.}
			\label{fig: nominal_pan_12}
		\end{subfigure}
		\hfil
		\begin{subfigure}{0.32\textwidth}
			\includegraphics{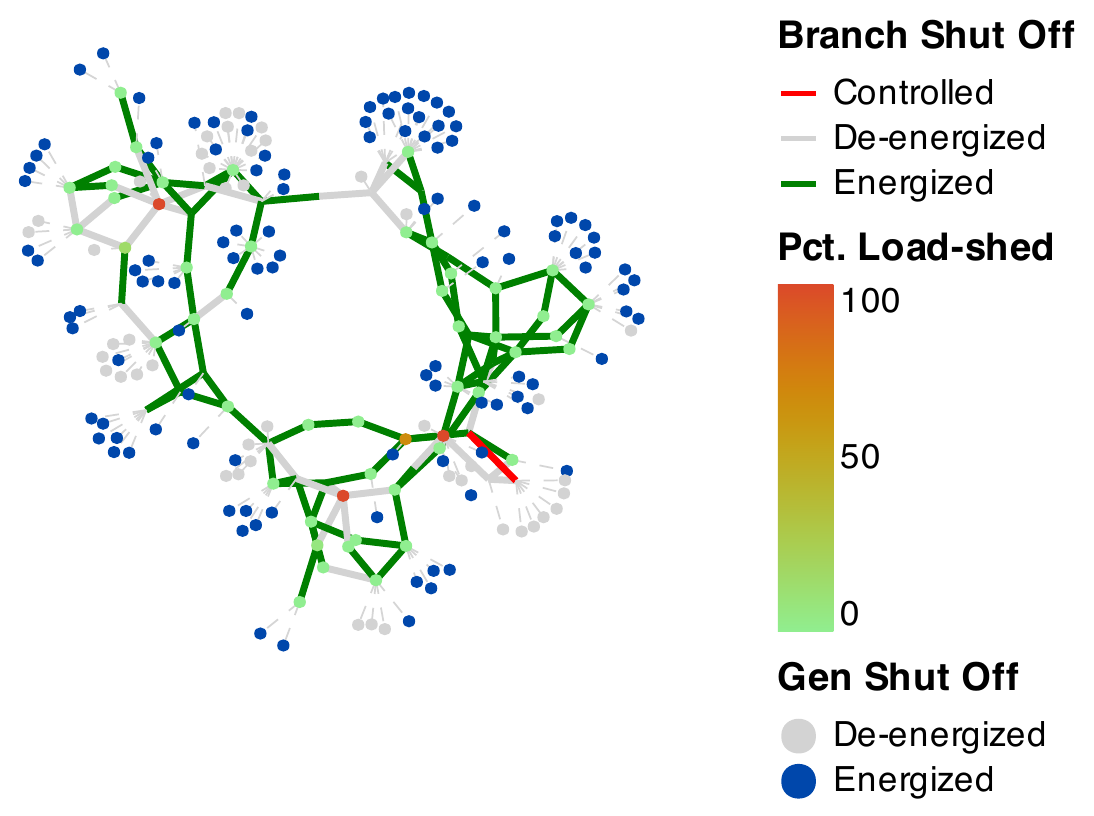}
			\caption{The additional load shed in period $12$ when transmission line $72$ is de-energized intentionally, compared to when the nominal plan keeps it on. }
			\label{fig: nominal_plan_controlled}
		\end{subfigure}
		\hfil
		\begin{subfigure}{0.32\textwidth}
			\includegraphics{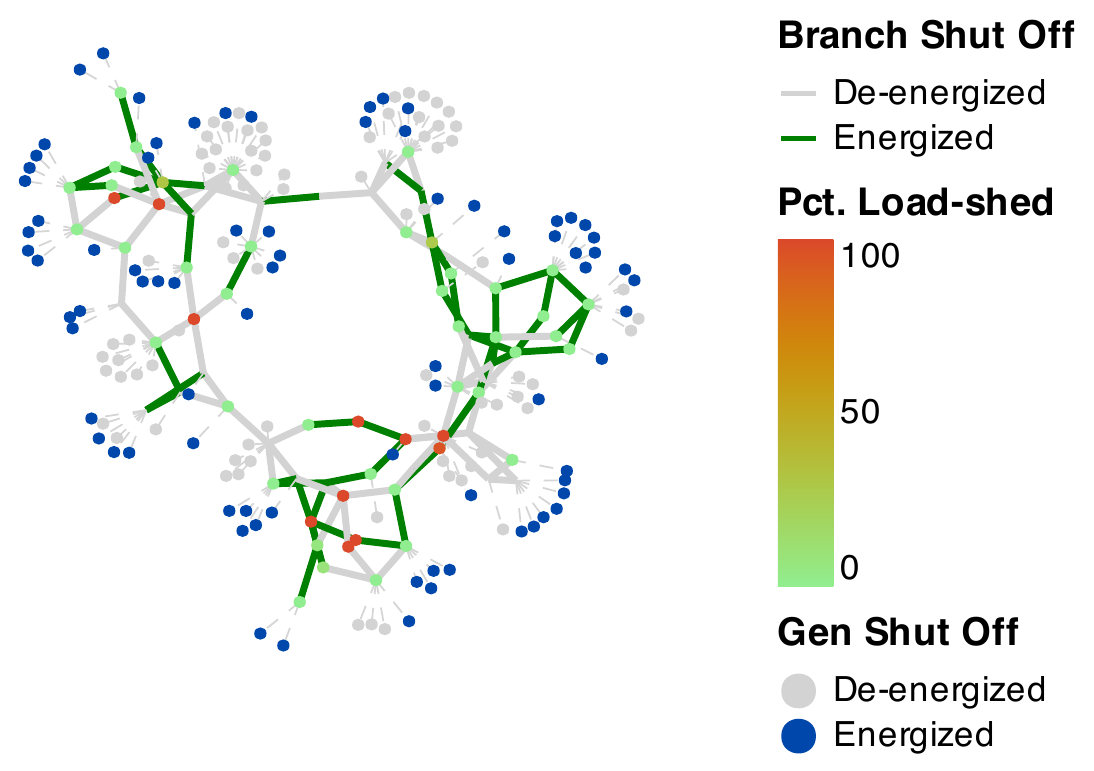}
			\caption{The nominal load plan for period $22$, illustrates how the nominal plan optimizes the load distribution and minimizes the risk exposure under uncertainty.}
			\label{fig: nominal_plan_22}
		\end{subfigure}
		\caption{Visualization of trade-off in the nominal plan. The red line in the bottom-right corner represents transmission line $72$.}
		\label{fig: nomial_plan_tradeoff}
	\end{figure*}
	
	\subsection{Benchmark Comparison} \label{subsec:benchmark}
	We choose the best optimal solution obtained from twenty batches with a sample size of $500$ in Section~\ref{subsec:computational} as the solution to our two-stage SMIP model~\eqref{prob:stage1}, denoted as \(X^* = \{x^*,z^*,\theta^*, P^*\}\). We then compare this optimal solution, \(X^*\), to the following three benchmark models: 
	\begin{enumerate}
		\item A deterministic model where we maximize load satisfaction without considering possible disruption:
		\begin{subequations}
			\label{prob:benchmark1 stage1}
			\begin{align}
			(M^{det}) \;\; \min \quad & \sum_{t = 1}^{T} \sum_{d \in \cD} w_d (1-x_{dt}) \span \span \notag \\
			\st \quad & \text{Constraints }\eqref{eqn:pfconsl1} - \eqref{eqn:componenttime1}.  \notag
			\end{align}
		\end{subequations}
		Since $(M^{det})$ does not consider the wildfire risk, the optimal solution will return a nominal plan \(X^{det} = \{x^{det},z^{det},\theta^{det}, P^{det}\}\) with no shut-off.
		
		\item A wait-and-see model where we can make a scenario-specific plan assuming $\xi^\omega$ is known before we make decisions. For each \(\omega \in \Omega\), we solve
		\begin{subequations}
			\label{prob:benchmark2 oracle}
			\begin{align}
			(M^{ws,\omega}) \qquad \span g^{ws,\omega} = \qquad \qquad \qquad \qquad  \notag\\
			\min \quad & \sum_{t = 1}^{\tau^\omega-1} \sum_{d \in \cD} w_d (1-x_{dt}) + f^\omega(z_{\cdot \tau^\omega-1}, \xi^\omega) \notag  \\
			\st \quad & \text{Constraints }\eqref{eqn:pfconsl1} - \eqref{eqn:componenttime1},  \notag
			\end{align}
		\end{subequations}
		and obtain a scenario-specific solution \(X^{ws,\omega} = \{x^{ws,\omega}, z^{ws,\omega}, \theta^{ws,\omega}, P^{ws,\omega}\}\). An expected wait-and-see cost can then be computed as $g_{n}^* = \sum_{\omega \in \Omega}{\frac{1}{n}} g^{ws,\omega}$.
		
		\item A wildfire-risk-based model proposed by \cite{Noah2022} to simultaneously reduce wildfire risk and {load-shedding}:
		\begin{subequations}
			\label{prob:benchmark3}
			\begin{align}
			(M^{rb}) \;\; \min \quad & \sum_{t = 1}^T \left[ \dfrac{\alpha R_{Fire, t}}{R_{Tot}} - (1-\alpha) \dfrac{\sum_{d \in \cD}x_{dt}w_dD_{dt}}{D_{Tot}} \right] \notag \\
			\st \quad & \mbox{Constraints }\eqref{eqn:pfconsl1} - \eqref{eqn:componenttime1}  \notag\\
			& R_{Fire,t}=\sum_{c \in \cC} R_{ct} z_{ct} \quad\quad \forall t \in \cT, \notag
			\end{align}
		\end{subequations}
		The component-wise risk measure $R_{ct}$ is the mean number of components affected by an endogenous fire started at component \(c\) across all scenarios. Fig.~\ref{fig:risk_value} illustrates the value of \(R_{ct}\) for $t = 13$. The components that face significant wildfire risk are marked in the figure. By controlling the number of energized components, we can compute the total wildfire risk $R_{Fire, t}$ for the period $t$. We use the total load $D_{Tot} = \sum_{d\in\cD, t\in\cT}D_{dt}$ and total wildfire risk $R_{Tot} = \sum_{c\in\cC, t\in\cT}R_{ct}$ as the normalization factors such that we can combine the wildfire risk and the {load-shedding} in the objective function. The parameter $\alpha \in [0, 1]$ adjusts the trade-off between serving loads and reducing wildfire risk. Solving model \((M^{rb})\) returns a nominal plan \(X^{rb,\alpha} = \{x^{rb,\alpha}, z^{rb,\alpha}, \theta^{rb,\alpha}, P^{rb,\alpha}\}\). 
		\begin{figure}
			\centering
			\setkeys{Gin}{width=.7\linewidth}
			\includegraphics{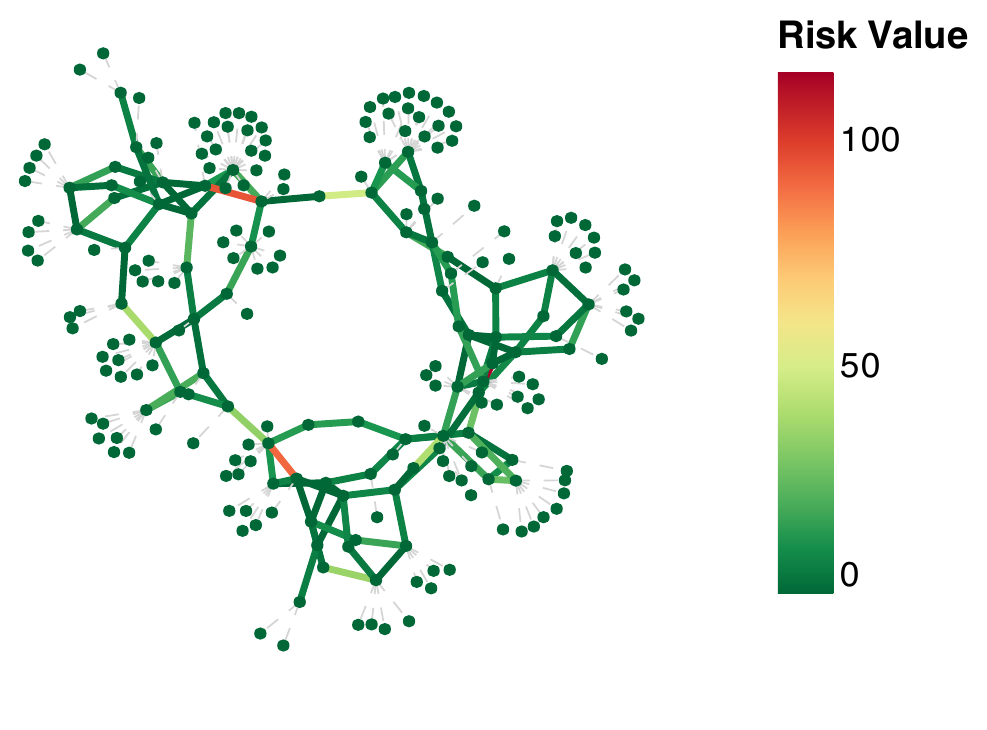}
			\caption{Illustration of wildfire risk \(R_{ct}\) at $t = 13$. }
			\label{fig:risk_value}
		\end{figure}
		
		\tcb{\item A scenario-based robust optimization model, in which we minimize the total cost of the worst-case scenario:}
		
		\vspace{-0.3cm}
		
		\tcb{\small
			\begin{subequations}
				\label{prob:benchmark4}
				\begin{align}
				(M^{ro})\ \min & \ \max_{\omega \in \Omega} \ \sum_{t = 1}^{\tau^\omega-1} \sum_{d \in \cD} w_d (1-x_{dt}) + f^\omega(z_{\cdot \tau^\omega-1}, \xi^\omega) \notag \\
				\st & \ \eqref{eqn:pfconsl1}-~\eqref{eqn:binaryrestriction1} \qquad\qquad\qquad\qquad\qquad \forall t \in \cT \notag.
				\end{align}
		\end{subequations}}
		
		\vspace{-0.3cm}
		
		\tcb{Solving model $(M^{ro})$ returns a nominal plan \(X^{ro} = \{x^{ro},z^{ro},\theta^{ro}, P^{ro}\}\).}
	\end{enumerate}
	
	Each model is solved, then evaluated with the same $n = 5,000$ out-of-sample scenarios \(\tilde{\Omega}\) used to evaluate the two-stage SMIP model~\eqref{prob:stage1}. For the deterministic model and the wait-and-see model, we obtain two following ratios to show their relative difference compared to model~\eqref{prob:stage1}:
	\begin{align*}
	\dfrac{g_{n}(X^{det}) - g_{n}(X^*)}{g_{n}(X^{*})} = 0.639  \mbox{ and }  \dfrac{g_{n}(X^*) - g_{n}^*}{g_{n}(X^*)} = 0.416.
	\end{align*}
	The first ratio emphasizes that the two-stage SMIP can significantly improve the operations compared to the deterministic approach agnostic to potential wildfires. The second ratio quantifies the value of perfect information. We can save $41.6\%$ of the cost if we can forecast the wildfire perfectly before we make the shut-off decisions. This large ratio reflects the high variance in wildfire scenarios and the difficulty for a single nominal shut-off plan to perform optimally in all scenarios. While it is not entirely possible to obtain the perfect information, it is helpful if we can obtain auxiliary data, conditioned on which we can identify a better representation of the scenarios. For example, we may exclude the scenarios with wildfires occurring in areas with forecast precipitation. 
	
	Fig.~\ref{fig:benchmark} illustrates such effect by selecting a subset of twenty scenarios: we can observe that the cost of the SMIP solution is significantly lower than the cost of the deterministic solution in all but one of the selected scenarios and can approach the best possible cost in two scenarios. If the auxiliary data happen to show those two scenarios are likely to happen, our SMIP solution will perform well. On the contrary, if the auxiliary data tell the other way, we may want to re-solve the two-stage SMIP model~\ref{prob:stage1} with a refined set of scenarios.
	
	\begin{figure}
		\centering
		\includegraphics[scale=0.40]{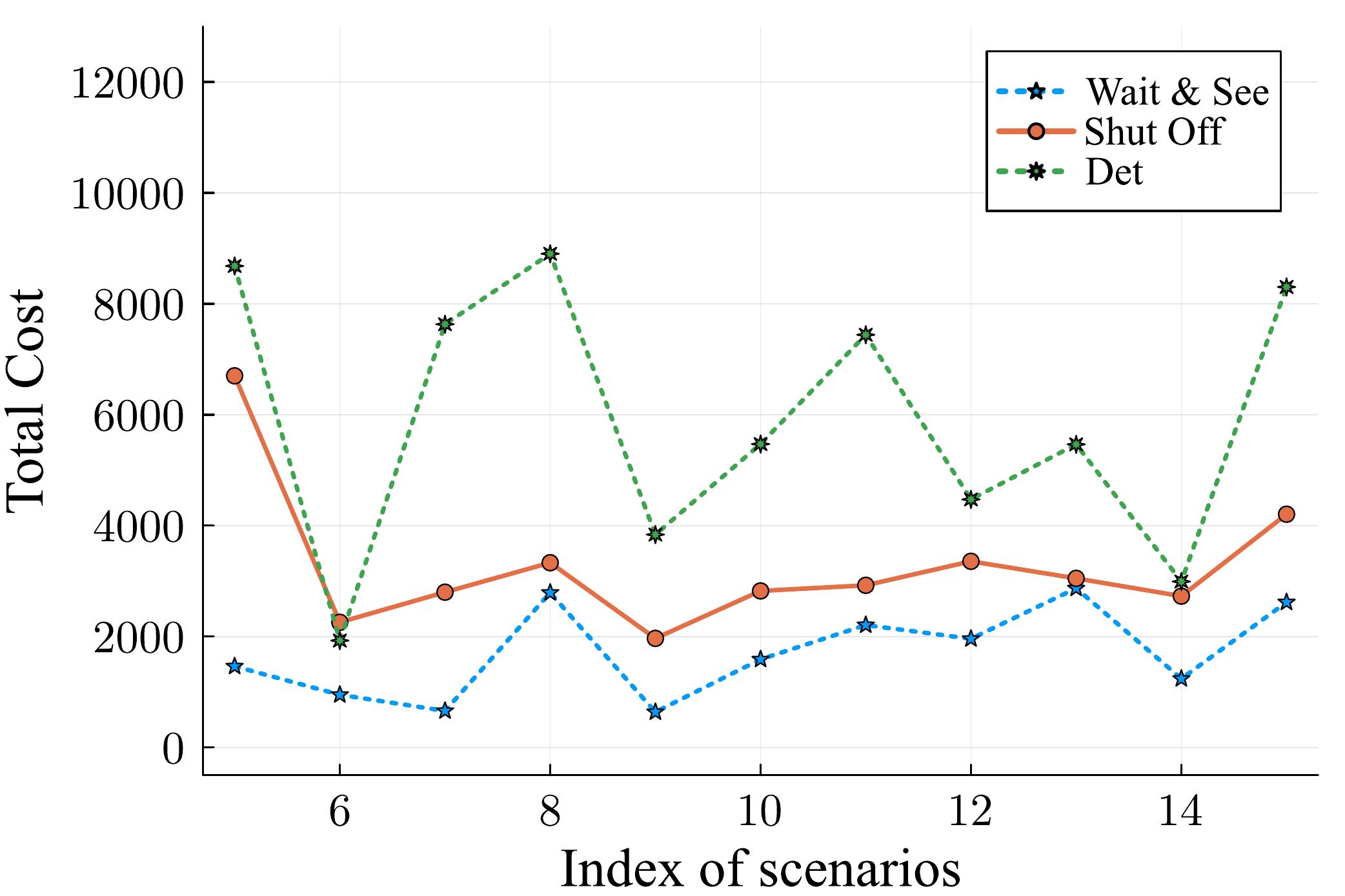}
		\caption{Scenario-specific costs for model~\eqref{prob:stage1}, the deterministic model $(M^{det})$, and the wait-and-see model $(M^{ws,\omega})$. For each scenario, the green star and orange circle represent the costs incurred by the nominal plans generated by $(M^{det})$ and model~\eqref{prob:stage1}. The blue star marks the wait-and-see cost.}
		\label{fig:benchmark}
	\end{figure}
	
	For the wildfire-risk-based model $(M^{rb})$, we use the set of scenarios $\Omega^*$, with which we obtain $X^*$, to calculate the wildfire risk value $R_{Fire, t}$ for all $t \in \cT$. We obtain a solution ${X}^{rb,\alpha}$ for $\alpha \in \{0.0, 0.1, \dots, 0.9\}$. We list the out-of-sample mean load-shedding cost and wildfire damage cost under non-disruptive scenarios (no wildfire ignitions) and disruptive scenarios in Table~\ref{table: cost for different alpha}, evaluated with \(\tilde{\Omega}\). 
	
	If the wildfire risk is ignored ($\alpha = 0.0$), it is equivalent to the deterministic solution \(X^{det}\), which leads to a large out-of-sample cost \(g_n(X^{det})\). By increasing $\alpha$, we can first reduce the total cost via the preventive shut-offs of potentially risky components, resulting in less damage cost. However, as $\alpha$ increases further, the nominal plan becomes too conservative, and many components are de-energized such that the increasing load-shedding cost outweighs the reduced damage costs. This illustrates a major challenge in the wildfire-risk-based model $(M^{rb})$ as there is little guidance to select a proper $\alpha$. 
	
	We then compare our SMIP model~\eqref{prob:stage1} with the wildfire-risk-based model $(M^{rb})$, via the ratio of relative improvements (RRI) $(g_n({X}^{rb,\alpha}) - g_n(X^*))/g_n(X^*)$ in Table~\ref{table: cost for different alpha}. The results show that the nominal plan obtained by solving model~\eqref{prob:stage1} performs significantly better{, and the RRI values for $X^{rb, \alpha}$ exceed $45\%$ for all \(\alpha\) selected. This indicates that, without considering wildfire scenarios and potential adaptive recourse actions, static non-robust planning is far from optimum, regardless of the level of conservatism.} The reason is that $(M^{rb})$ only considers minimizing the total wildfire risk and satisfying the power demand, which leads to the nominal plans that de-energize any component with $R_{ct}>0$ for some $t \in \cT$ as early as possible, only if this action does not affect the future supply of electricity. However, this property may compromise the resilience of the power grid by de-energizing an excessive number of components. If a disruption occurs, there might be very high load-shedding costs. 
	
	\begin{table}[ht]
		\centering
		\caption{Load-shedding costs and damage costs of nominal plans $X^{rb,\alpha}$ under non-disruptive and disruptive scenarios. }
		\label{table: cost for different alpha}
		\begin{tabular}{c|c|cc|cc}\toprule[1pt]\hline
			& \multicolumn{1}{c|}{Non-Disr.}  & \multicolumn{2}{c|}{Disruptive} &    \multicolumn{2}{c}{{Total Cost}}          \\ 
			$\alpha$& Load shed  & Load shed & \multicolumn{1}{c|}{Damage}       &    $g_n(\cdot)$    &   RRI  \\  \cmidrule{1-6} 
			$0.0/X^{det}$   & $0.0$                     & $3247.0$               & $2814.0$   & $5936.2$  & $63.9\%$   \\
			$0.1$   & $148.6$                   & $3843.5$               & $1767.5$   & $5495.4$  & $51.7\%$   \\
			$0.2$   & $162.9$                   & $3850.7$               & $1739.8$   & $5480.8$  & $51.4\%$   \\
			$0.3$   & $221.4$                   & $3891.0$               & $1713.9$   & $5489.4$  & $51.6\%$   \\
			$0.4$   & $171.6$                   & $3786.3$               & $1783.4$   & $5454.9$  & $50.6\%$   \\
			$0.5$   & $251.1$                   & $3712.8$               & $1773.9$   & $5373.7$  & $48.4\%$   \\
			$0.6$   & $106.7$                   & $3686.8$               & $1817.0$   & $5390.5$  & $48.9\%$   \\ 
			$0.7$   & $469.3$                   & $3721.4$               & $1754.1$   & $5425.9$  & $50.0\%$   \\ 
			$0.8$   & $883.6$                   & $3949.0$               & $1700.1$   & $5532.7$  & $52.8\%$   \\ 
			$0.9$   & $1505.8$                  & $4285.3$               & $1341.1$   & $5510.5$  & $52.2\%$   \\ \cmidrule{1-6}
			$X^*$ & $1678.2$                   & $2355.2$              & $1333.6$      & $3612.7$ & $0.0\%$      \\ \hline
			\bottomrule[1pt]    
		\end{tabular}
	\end{table}
	
	\tcb{We use a scenario-appending algorithm similar to that in Ref.~\cite{yang2021robust}. The algorithm starts with solving a relaxed model considering only a prespecified subset of scenarios $\hat{\Omega} \subset \Omega^*$ and obtains a solution \(\hat{X}\). An oracle is run to find the worst-case scenario \(\hat{\omega} \in \Omega^*\) given \(\hat{X}\). We append such a scenario to the set \(\hat{\Omega}\) and repeat the above process until the obtained \(\hat{\omega}\) is already in \(\hat{\Omega}\). The experiment demonstrates that the subset $\hat{\Omega}$ includes $23$ scenarios, much smaller than \(|\Omega^*| = 500\), allowing us to run the algorithm in $109$ seconds. Although the robust optimization algorithm runs faster, the robust solution $X^{ro}$ presents worse out-of-sample test results using scenario set $\tilde{\Omega}$ in TABLE~\ref{table:robustbenchmark}, compared with those of the SMIP solution \(X^*\). The robust nominal plan $X^{ro}$ de-energizes more components early in the time horizon to mitigate endogenous wildfire risks, which is illustrated in Fig.~\ref{fig:robustbenchmark}. Consequently, this conservative strategy results in a substantial amount of load-shedding in both nominal and disruptive scenarios, leading to a total cost of up to four times that of $X^*$. The worst-case in-sample cost among scenarios in $\Omega^*$ for $X^{ro}$ is $32,374$ compared to $34,016$ for $X^*$, which is reasonable as $X^{ro}$ optimizes the worst-case scenario. However, the out-of-sample worst-case costs, as demonstrated in the last column of TABLE~\ref{table:robustbenchmark}, indicate that \(X^{ro}\) is sensitive to the scenario selection and performs poorly when facing new samples. In contrast, our stochastic programming solution $X^*$ shows robustness as the out-of-sample worst-case cost is even smaller than the in-sample one. Moreover, the performance of $X^{ro}$ significantly lags behind that of $X^*$ across other scenarios. Although the robust model requires less time to generate a feasible nominal plan, the numerical results show that the robust nominal plan is conservative because it relies on limited scenario information and prioritizes minimizing worst-case costs.}

	\begin{table}[ht]
		\centering
		\caption{\tcb{Load-shedding costs and damage costs of nominal plans $X^{ro}$ under non-disruptive and disruptive scenarios. }}
		\label{table:robustbenchmark}
		\begin{tabular}{c|c|cc|cc}\toprule[1pt]\hline
			& \tcb{Non-Disr.}  & \multicolumn{2}{c|}{\tcb{Disruptive}} &    \multicolumn{2}{c}{{\tcb{Cost}} }         \\ 
			& \tcb{Load shed}         & \tcb{Load shed}     & \multicolumn{1}{c|}{\tcb{Damage}}        & \tcb{$g_n(\cdot)$}      & \tcb{Worst-case}  \\ \cmidrule{1-6} 
			\tcb{$X^{ro}$}   & \tcb{$30130.2$}         & \tcb{$18067.3$}     & \tcb{$2506.3$}                           & \tcb{$20770.5$}         & \tcb{$56848.3$}   \\ \cmidrule{1-6} 
			\tcb{$X^*$}      & \tcb{$1678.2$}          & \tcb{$2355.2$}      & \tcb{$1333.6$}                           & \tcb{$3612.7$}          & \tcb{$28871.1$}      \\
			
			\bottomrule[1pt]    
		\end{tabular}
	\end{table}

	\begin{figure}
		\centering
		\setkeys{Gin}{width=.7\linewidth}
		\includegraphics{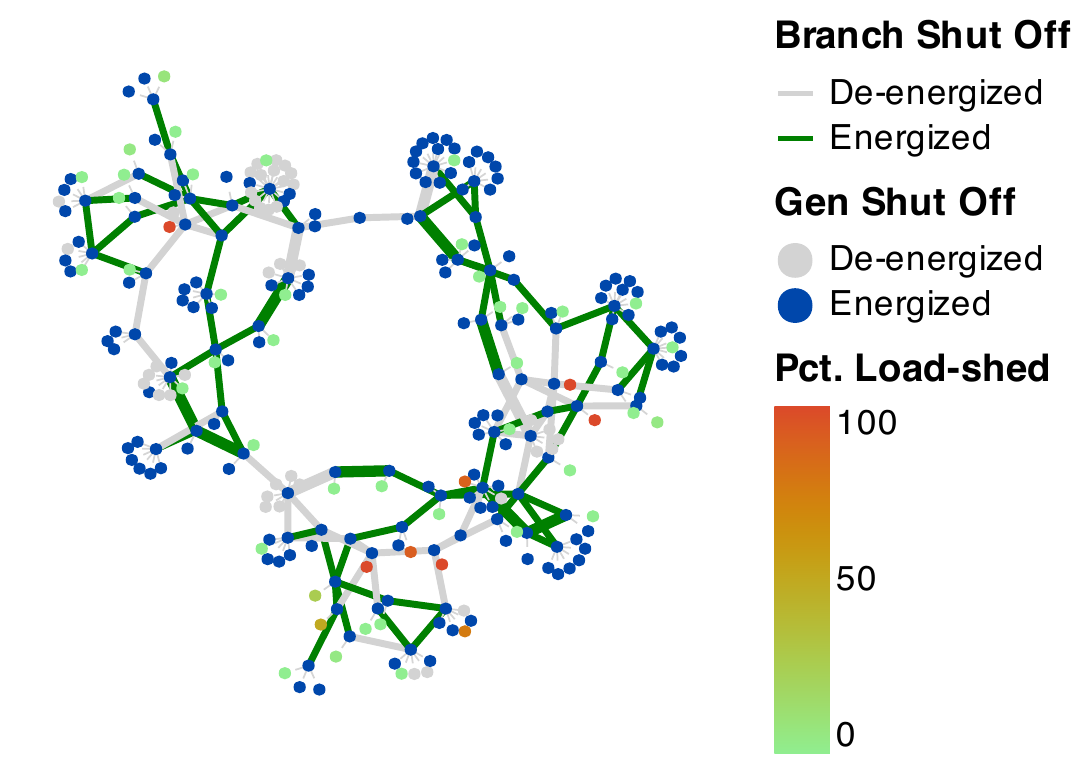}
		\caption{Illustration of the power system under the nominal plan $X^{ro}$ at time $t = 16$.}
		\label{fig:robustbenchmark}
	\end{figure}

	\subsection{Sensitivity Analysis of Disruption Probability} \label{subsec:sensitivity}
	We simulate the set of wildfire scenarios with a probabilistic model described in Section~\ref{subsec:scenario}, which then serves as the uncertainty model in the SMIP. It is necessary to examine the robustness of the optimal solution to the SMIP against the potential inaccuracy in such an uncertainty model. Therefore, we perform a sensitivity analysis in this section to check the out-of-sample performance of SMIP solutions obtained with different uncertainty assumptions.
	
	In Section~\ref{sec:model}, we construct a scenario set $\Omega^*$ in the SMIP model~\eqref{prob:stage1}, each scenario with an equal probability of occurrence, $1/|\Omega^*|$. Such a scenario set can be partitioned as $\Omega^* = \Omega^*_D\cup\Omega^*_N$ , where $\Omega^*_D$ $(\Omega^*_N)$ represents the set of scenarios with(out) a disruption. When there is no disruption, the nominal plan is carried out over the entire time horizon. Therefore, the partition is equivalent to $\Omega^* = \Omega^*_D \cup \{{\omega_0}\}$ with probabilities $p^{\omega_0} = |\Omega^*_N|/|\Omega^*|$ and $p^\omega = (1-p^{\omega_0})/|\Omega^*_D|$ for $\omega \in \Omega^*_D$, where ${\omega_0}$ is a scenario without disruption. 
	
	For the sensitivity analysis, we add a perturbation $\Delta p$ to $p^{\omega_0}$ and adjust \(p^\omega\) for \(\omega \in \Omega^*_D\) accordingly. We solve the SMIP for each \(\Delta p\) and test its solution's out-of-sample performance over the scenario set \(\tilde{\Omega}\), displayed in Table~\ref{table: cost for different delta p}. The out-of-sample no-disruption probability \(|\tilde{\Omega}_N|/|\tilde{\Omega}|\) is approximately \(0.05\), similar to that of \(\Omega^*\).
	
	We observe from Table~\ref{table: cost for different delta p} that the optimal SMIP solution can yield a decent out-of-sample performance even when the input perturbation is quadruple the original no-disruption probability (\(\Delta p = 0.2\)). This indicates that the SMIP optimal solution is robust against input inaccuracy. When \(\Delta p\) keeps increasing, the SMIP values a nominal plan that shuts off fewer components and thus minimizes the {load-shedding cost} under scenarios without wildfire disruption.  It leads to higher out-of-sample load-shedding and damage costs simultaneously as insufficient shut-offs are executed to prevent endogenous wildfires. The sensitivity analysis results show that the SMIP optimal solution is reliable as long as the input uncertainty model is not too liberal when estimating the wildfire risk.
	
	\begin{table}[ht]
		\centering
		\caption{Load-shedding costs and damage costs of nominal plans $X^{\Delta p}$ under non-disruptive and disruptive scenarios. }
		\label{table: cost for different delta p}
		\begin{tabular}{c|c|cc|cc}\toprule[1pt]\hline
			& \multicolumn{1}{c|}{Non-Disr.}  & \multicolumn{2}{c|}{Disruptive} &    \multicolumn{2}{c}{{Total Cost}}          \\ 
			$\Delta p$& Load shed  & Load shed & \multicolumn{1}{c|}{Damage}       &    $g_n(\cdot)$    &   RRI  \\ \cmidrule{1-6} 
			$-0.05$   & $1805.1$                   & $2553.0$              & $1242.7$      & $3782.8$ & $4.5\%$     \\
			$-0.015$  & $1741.5$                   & $2513.7$              & $1353.6$      & $3787.3$ & $4.6\%$      \\
			$0.0/X^*$ & $1678.2$                   & $2355.2$              & $1333.6$      & $3612.7$ & $0.0\%$      \\
			$0.015$   & $1660.0$                   & $2343.9$              & $1341.7$      & $3609.7$ & $-0.1\%$      \\
			$0.05$    & $1504.9$                   & $2360.2$              & $1325.4$      & $3609.6$ & $-0.1\%$      \\ 
			$0.1$     & $1250.7$                   & $2344.8$              & $1317.1$      & $3586.5$ & $-0.7\%$      \\ 
			$0.2$     & $828.4$                    & $2196.5$              & $1578.8$      & $3697.5$ & $2.3\%$      \\  
			$0.3$     & $207.3$                    & $2181.1$              & $1752.3$      & $3882.5$ & $7.3\%$      \\  
			$0.5$     & $185.1$                    & $2343.5$              & $1845.6$      & $4109.1$ & $13.5\%$      \\
			$0.7$     & $109.7$                    & $2410.8$              & $1851.0$      & $4153.3$ & $15.0\%$      \\
			$0.9$     & $99.0$                     & $2476.8$              & $1878.1$      & $4276.7$ & $18.4\%$      \\\hline  
			\bottomrule[1pt]    
		\end{tabular}
	\end{table}
	
	\subsection{Interaction between Endogenous and Exogenous Wildfires} \label{subsec:interaction}
	In this section, we investigate how endogenous and exogenous wildfires interact and affect the shut-off plans. Intuitively speaking, considering endogenous wildfires tends to encourage more shut-offs to prevent faults ignited by power system components, while the presence of exogenous wildfires introduces additional load-shedding costs which can be mitigated by keeping more components energized. To quantify this interaction, we compare three nominal plans: 1) $X^{exo}$ obtained by solving model~\eqref{prob:stage1} with $500$ scenarios that only contain exogenous wildfires; 2) $X^{end}$ obtained by solving model~\eqref{prob:stage1} with $500$ scenarios that only contain endogenous wildfires; and 3) \(X^*\) obtained in Section~\ref{subsec:benchmark} with $500$ scenarios that contain a mixture of exogenous and endogenous wildfires. Fig.~\ref{fig:interaction} illustrates the power system status, highlighting the de-energized components and load-shed in period \(t = 16\). The nominal plan \(X^{exo}\), shown in the left plot, exhibits the fewest number of de-energized components. In contrast, the right plot shows the nominal plan \(X^{end}\) leads to the highest number of shut-offs. The nominal plan \(X^*\), considering both exogenous and endogenous wildfires, represents a trade-off between the two. We perform three additional out-of-sample tests to evaluate those nominal plans with $5,000$ scenarios of: 1) exogenous wildfires only; 2) endogenous wildfires only; and 3) a mixture of endogenous and exogenous wildfires, as shown in \tcb{Fig.}~\ref{fig:interaction}. The nominal plan $X^{exo}$ exhibits inferior performance in both load-shedding and damage costs when endogenous wildfires are possible. The nominal plan \(X^{end}\) becomes conservative and shuts off too many components, resulting in consistently high load-shedding costs. While the nominal plan $X^*$ achieves a balance in between. Our finding confirms the intuition of how exogenous and endogenous wildfires interact and highlights the importance of modeling both types of wildfires in power system operations.
	
	\begin{figure*}
		\centering
		\setkeys{Gin}{width=\linewidth}
		\begin{subfigure}{0.32\textwidth}
			\includegraphics{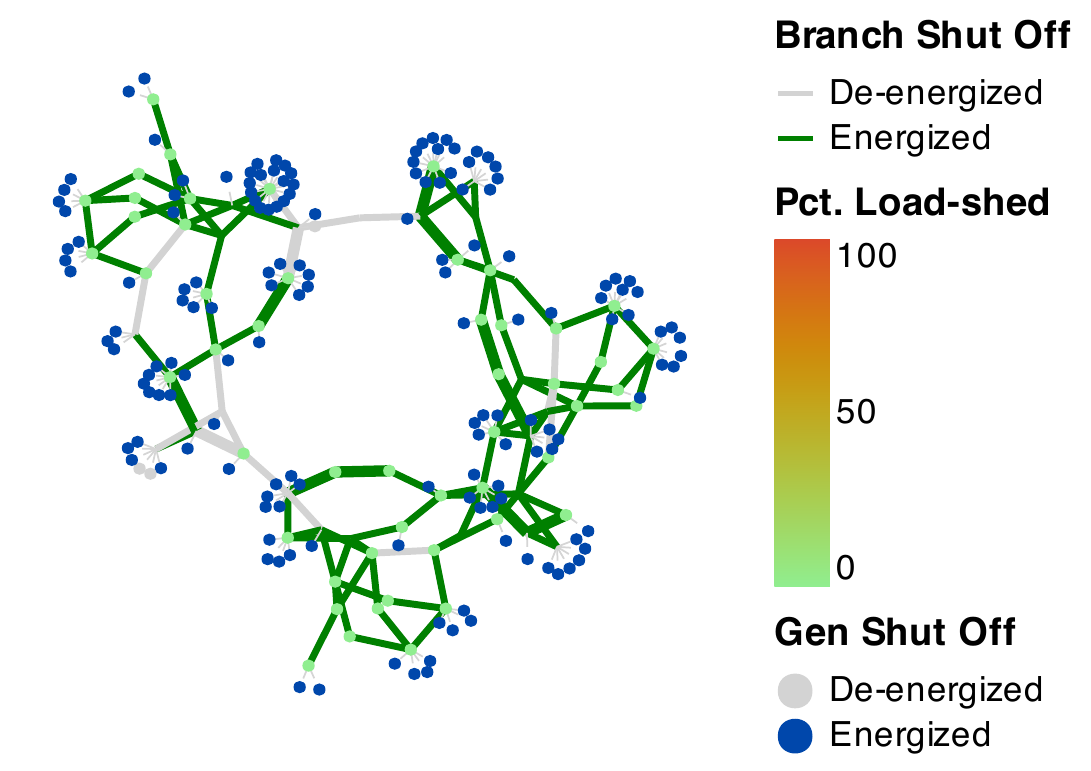}
			\caption{$15$ transmission lines are de-energized in $X^{exo}$ in period $16$.}
		\end{subfigure}
		\hfil
		\begin{subfigure}{0.32\textwidth}
			\includegraphics{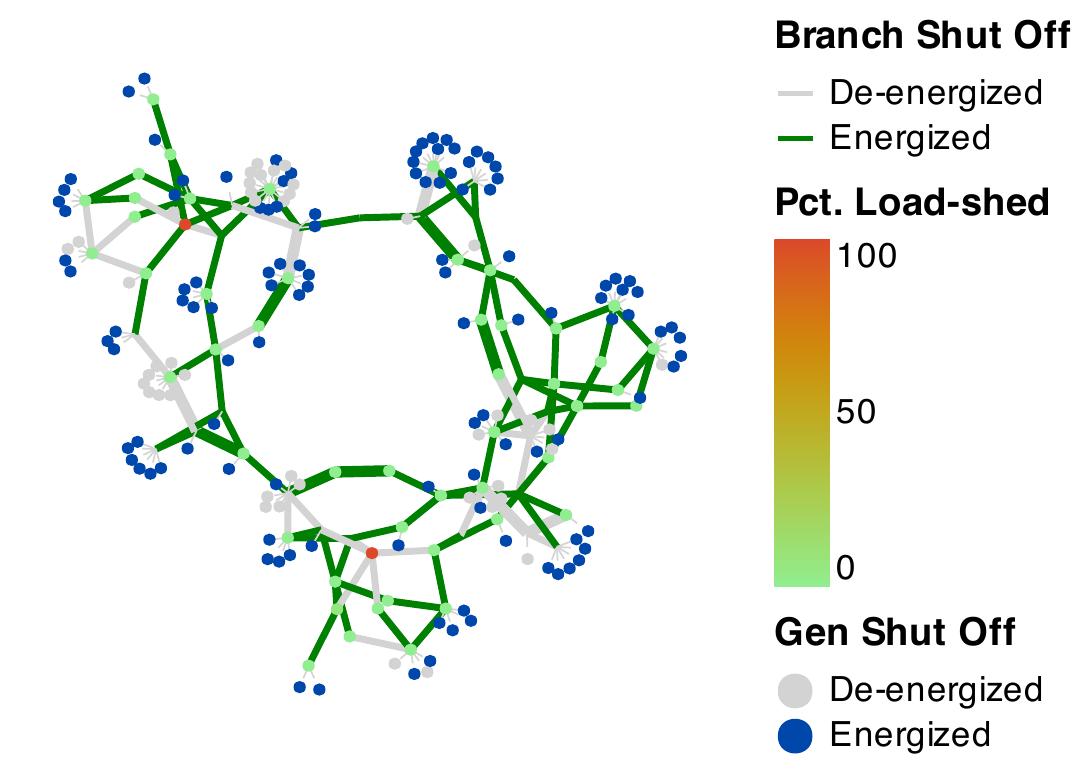}
			\caption{$26$ transmission lines are de-energized in $X^{*}$ in period $16$.}
		\end{subfigure}
		\hfil
		\begin{subfigure}{0.32\textwidth}
			\includegraphics{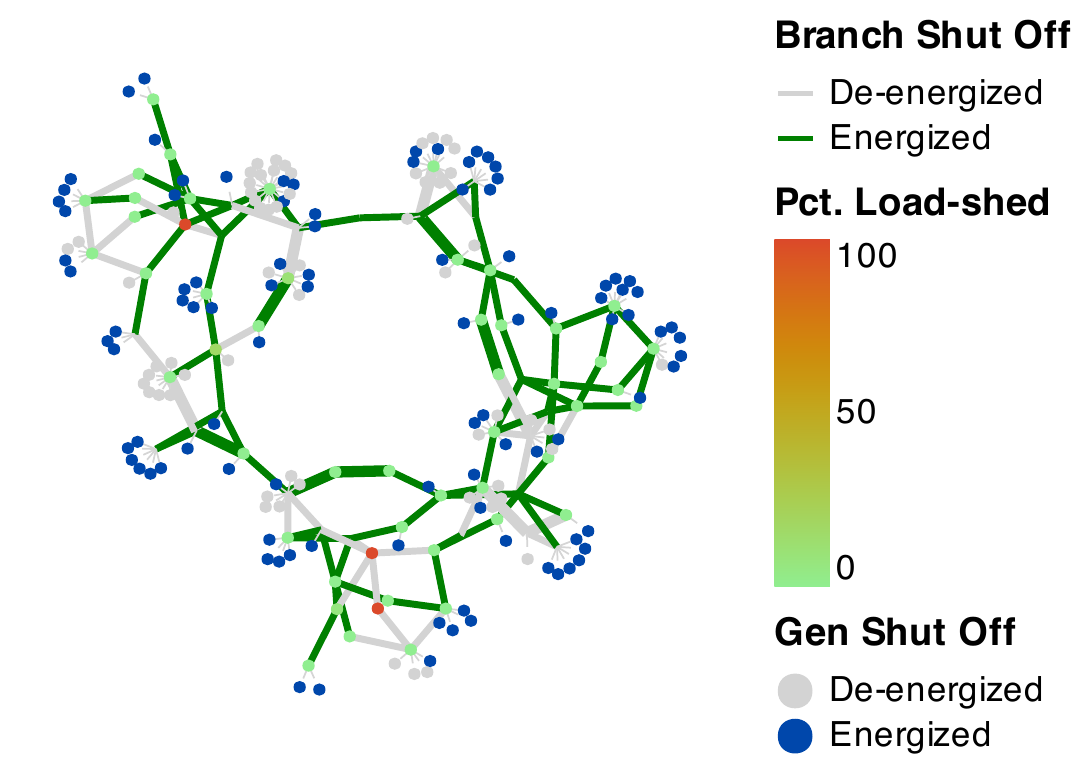}
			\caption{$31$ transmission lines are de-energized in $X^{end}$ in period $16$.}
		\end{subfigure}
		\caption{Visualization of interaction between endogenous and exogenous wildfire risk.}
		\label{fig:interaction}
	\end{figure*}
	
	\begin{table*}[ht]
		\centering
		\caption{{The performance of the nominal plans generated from different scenarios under different test sets. } }
		\label{table: interaction}
		\begin{tabular}{c|cccc||cccc||cccc}\toprule[1pt]\hline
			&  \multicolumn{4}{c||}{{Exogenous Wildfire Scenario Set}} & \multicolumn{4}{c||}{{Mixture Wildfire Scenario Set}} & \multicolumn{4}{c}{{Endogenous Wildfire Scenario Set}} \\\cmidrule{1-13} 
			& \multicolumn{1}{c}{{Non-Disr.}}  & \multicolumn{2}{c}{{Disruptive}} &  {Total} & \multicolumn{1}{c}{{Non-Disr.}}  & \multicolumn{2}{c}{{Disruptive}} & {Total} & \multicolumn{1}{c}{{Non-Disr.}}  & \multicolumn{2}{c}{{Disruptive}} &  {Total} \\ 
			{Type} & {Load shed}  & {Load shed} & \multicolumn{1}{c}{{Damage}}  & {$g_n(\cdot)$}  & {Load shed}  & {Load shed} & \multicolumn{1}{c}{{Damage}}  & {$g_n(\cdot)$} & {Load shed}  & {Load shed} & \multicolumn{1}{c}{{Damage}}  & {$g_n(\cdot)$} \\ \cmidrule{1-13} 
			
			{$X^{exo}$}   & {$0.0$}         & {$707.1$}     & {$636.5$}   & {$1289.8$}   & {$0.0$}        & {$3231.8$}        & {$2961.0$}      & {$5960.2$}      & {$0.0$}      & {$3380.4$}     & {$2979.8$}    & {$6106.0$}   \\
			
			{$X^{*\quad}$}   & {$1678.1$}      & {$1892.8$}     & {$636.5$}   & {$2495.2$}   & {$1678.2$}     & {$2355.2$}      & {$1333.6$}        & {$3612.7$}      & {$1678.2$}   & {$3107.1$}    & {$1814.0$}    & {$4792.8$}   \\
			
			{$X^{end}$}   & {$1998.5$}      & {$2249.9$}     & {$636.5$}   & {$2850.9$}   & {$1998.5$}     & {$2748.6$}        & {$1240.3$}      & {$3909.1$}      & {$1998.5$}    & {$3147.0$}   & {$1514.0$}    & {$4554.4$}   \\ \hline  
			\bottomrule[1pt]    
		\end{tabular}
	\end{table*}

\section{Conclusions \& Future Work} \label{sec:conclusion}
	This work contributes to the literature on stochastic optimization for power system operation under wildfire risk. We propose a novel two-stage stochastic program that captures the uncertainty of wildfire disruptions and temporal dynamics. We also develop an efficient decomposition algorithm that exploits the binary structure of state variables and generates valid cuts for solving large-scale instances. Our numerical experiments demonstrate the effectiveness and robustness of our approach in terms of solution quality and computational speed. They illustrate the benefits of the stochastic programming model by comparing it with deterministic counterparts \tcb{and the scenario-based robust optimization model.}
	
	For future work, we plan to extend our research in several directions. One direction is to investigate a theory-driven approach for generating effective and efficient cuts in our decomposition algorithm. Another direction is to expand the current modeling setting: 1) to relax the assumption of at most one disruption and extend our model to a multistage problem; 2) to include the unit commitment decisions, which would require an algorithmic development to handle the continuous state variables to model the ramping constraints; 3) to formulate a realistic multi-period uncertainty set/ambiguity set \tcb{based on high-fidelity wildfire simulators} and model the wildfire disruption in robust/distributionally robust optimization. A third direction is to address the challenge of ensuring feasibility with AC power flow when using a linear approximation of power flow in our model. Finally, we aim to develop a rolling horizon framework that can handle the wildfire forecast updates and dynamically re-optimize the de-energization schedule.

\bibliographystyle{IEEEtran}
\bibliography{ms_wildfire}

\begin{thebibliography}{10}
\providecommand{\url}[1]{#1}
\csname url@samestyle\endcsname
\providecommand{\newblock}{\relax}
\providecommand{\bibinfo}[2]{#2}
\providecommand{\BIBentrySTDinterwordspacing}{\spaceskip=0pt\relax}
\providecommand{\BIBentryALTinterwordstretchfactor}{4}
\providecommand{\BIBentryALTinterwordspacing}{\spaceskip=\fontdimen2\font plus
\BIBentryALTinterwordstretchfactor\fontdimen3\font minus
  \fontdimen4\font\relax}
\providecommand{\BIBforeignlanguage}[2]{{%
\expandafter\ifx\csname l@#1\endcsname\relax
\typeout{** WARNING: IEEEtran.bst: No hyphenation pattern has been}%
\typeout{** loaded for the language `#1'. Using the pattern for}%
\typeout{** the default language instead.}%
\else
\language=\csname l@#1\endcsname
\fi
#2}}
\providecommand{\BIBdecl}{\relax}
\BIBdecl

\bibitem{Muhs2020}
J.~W. Muhs, M.~Parvania, and M.~Shahidehpour, ``Wildfire risk mitigation: A
  paradigm shift in power systems planning and operation,'' \emph{IEEE Open
  Access Journal of Power and Energy}, vol.~7, pp. 366--375, 2020.

\bibitem{Holmes2008}
T.~P. Holmes, R.~J. Huggett~Jr, and A.~L. Westerling, ``Statistical analysis of
  large wildfires,'' \emph{The Economics of Forest Disturbances}, vol.~79, pp.
  59--77, 2008.

\bibitem{vazquez2022wildfire}
D.~A.~Z. Vazquez, F.~Qiu, N.~Fan, and K.~Sharp, ``Wildfire mitigation plans in
  power systems: A literature review,'' \emph{IEEE Transactions on Power
  Systems}, vol.~37, no.~5, pp. 3540--3551, 2022.

\bibitem{2019wildfirerisk}
\BIBentryALTinterwordspacing
``2019 wildfire risk report,'' CoreLogic, Tech. Rep., 2019. [Online].
  Available:
  \url{https://www.corelogic.com/wp-content/uploads/sites/4/downloadable-docs/wildfire-report_0919-01-screen.pdf}
\BIBentrySTDinterwordspacing

\bibitem{choobineh2015vulnerability}
M.~Choobineh, B.~Ansari, and S.~Mohagheghi, ``Vulnerability assessment of the
  power grid against progressing wildfires,'' \emph{Fire Safety Journal},
  vol.~73, pp. 20--28, 2015.

\bibitem{WildfireManagement}
S.~Jazebi, F.~de~Le\'{o}n, and A.~Nelson, ``Review of wildfire management
  techniques—{Part I}: Causes, prevention, detection, suppression, and data
  analytics,'' \emph{IEEE Transactions on Power Delivery}, vol.~35, no.~1, pp.
  430--439, 2020.

\bibitem{PSPS}
\BIBentryALTinterwordspacing
``Utility public safety power shutoff plans (de-energization).'' California
  Public Utilities Commission (CPUC), 2022. [Online]. Available:
  \url{https://www.cpuc.ca.gov/psps/}
\BIBentrySTDinterwordspacing

\bibitem{rhodes2020balancing}
N.~Rhodes, L.~Ntaimo, and L.~Roald, ``Balancing wildfire risk and power outages
  through optimized power shut-offs,'' \emph{IEEE Transactions on Power
  Systems}, vol.~36, no.~4, pp. 3118--3128, 2021.

\bibitem{taylor2022framework}
S.~Taylor and L.~A. Roald, ``A framework for risk assessment and optimal line
  upgrade selection to mitigate wildfire risk,'' \emph{Electric Power Systems
  Research}, vol. 213, p. 108592, 2022.

\bibitem{muhs2020wildfire}
J.~W. Muhs, M.~Parvania, and M.~Shahidehpour, ``Wildfire risk mitigation: A
  paradigm shift in power systems planning and operation,'' \emph{IEEE Open
  Access Journal of Power and Energy}, vol.~7, pp. 366--375, 2020.

\bibitem{davoudi2021reclosing}
M.~Davoudi, B.~Efaw, M.~Avenda{\~n}o-Mora, J.~L. Lauletta, and G.~B. Huffman,
  ``Reclosing of distribution systems for wildfire prevention,'' \emph{IEEE
  Transactions on Power Delivery}, vol.~36, no.~4, pp. 2298--2307, 2021.

\bibitem{DataMiningPreventionWildfire}
T.~Zhou, B.~Li, C.~Wu, Y.~Tan, L.~Mao, and W.~Wu, ``Studies on big data mining
  techniques in wildfire prevention for power system,'' in \emph{2019 IEEE 3rd
  Conference on Energy Internet and Energy System Integration (EI2)}, 2019, pp.
  866--871.

\bibitem{bayani2022quantifying}
R.~Bayani, M.~Waseem, S.~D. Manshadi, and H.~Davani, ``Quantifying the risk of
  wildfire ignition by power lines under extreme weather conditions,''
  \emph{IEEE Systems Journal}, vol.~17, no.~1, pp. 1024--1034, 2022.

\bibitem{Hong2022data}
W.~Hong, B.~Wang, M.~Yao, D.~Callaway, L.~Dale, and C.~Huang, ``Data-driven
  power system optimal decision making strategy under wildfire events,''
  Lawrence Livermore National Lab, Tech. Rep. LLNL-CONF-831390, 2022.

\bibitem{huang2023review}
C.~Huang, Q.~Hu, L.~Sang, D.~D. Lucas, R.~Wong, B.~Wang, W.~Hong, M.~Yao, and
  V.~Donde, ``A review of public safety power shutoffs (psps) for wildfire
  mitigation: Policies, practices, models and data sources,'' \emph{IEEE
  Transactions on Energy Markets, Policy and Regulation}, 2023.

\bibitem{DP_PSPS}
A.~Lesage-Landry, F.~Pellerin, J.~A. Taylor, and D.~S. Callaway, ``Optimally
  scheduling public safety power shutoffs,'' \emph{arXiv preprint
  arXiv:2203.02861}, 2022.

\bibitem{bayani2023resilient}
R.~Bayani and S.~D. Manshadi, ``Resilient expansion planning of electricity
  grid under prolonged wildfire risk,'' \emph{IEEE Transactions on Smart Grid},
  2023.

\bibitem{zhou2023optimal}
Y.~Zhou, K.~Sundar, D.~Deka, and H.~Zhu, ``Optimal power system topology
  control under uncertain wildfire risk,'' \emph{arXiv preprint
  arXiv:2303.07558}, 2023.

\bibitem{Noah2022}
N.~Rhodes and L.~Roald, ``Co-optimization of power line de-energization and
  restoration under high wildfire ignition risk,'' \emph{arXiv preprint
  arXiv:2204.02507}, 2022.

\bibitem{kody2022sharing}
A.~Kody, A.~West, and D.~K. Molzahn, ``Sharing the load: Considering fairness
  in de-energization scheduling to mitigate wildfire ignition risk using
  rolling optimization,'' in \emph{2022 IEEE 61st Conference on Decision and
  Control (CDC)}.\hskip 1em plus 0.5em minus 0.4em\relax IEEE, 2022, pp.
  5705--5712.

\bibitem{ke2022managing}
G.~Y. Ke, ``Managing reliable emergency logistics for hazardous materials: A
  two-stage robust optimization approach,'' \emph{Computers \& Operations
  Research}, vol. 138, p. 105557, 2022.

\bibitem{WildfireManagement2}
S.~Jazebi, F.~de~Le\'{o}n, and A.~Nelson, ``Review of wildfire management
  techniques—{Part II}: Urgent call for investment in research and
  development of preventative solutions,'' \emph{IEEE Transactions on Power
  Delivery}, vol.~35, no.~1, pp. 440--450, 2020.

\bibitem{Li2021SpatialAT}
S.~Li and T.~Banerjee, ``Spatial and temporal pattern of wildfires in
  {California} from 2000 to 2019,'' \emph{Scientific Reports}, vol.~11, no.
  1:8779, 2021.

\bibitem{nagpal2018wildfire}
M.~Nagpal, R.~P. Barone, T.~G. Martinich, Z.~Jiao, S.-H. Manuel, and
  S.~Merriman, ``Wildfire trips de-energized line shunt reactor,'' \emph{IEEE
  Transactions on Power Delivery}, vol.~34, no.~2, pp. 760--768, 2018.

\bibitem{pico2019transient}
H.~N.~V. Pico and B.~B. Johnson, ``Transient stability assessment of
  multi-machine multi-converter power systems,'' \emph{IEEE Transactions on
  Power Systems}, vol.~34, no.~5, pp. 3504--3514, 2019.

\bibitem{mitchell2013power}
J.~W. Mitchell, ``Power line failures and catastrophic wildfires under extreme
  weather conditions,'' \emph{Engineering Failure Analysis}, vol.~35, pp.
  726--735, 2013.

\bibitem{Scott2013AWR}
J.~H. Scott, M.~P. Thompson, and D.~E. Calkin, ``A wildfire risk assessment
  framework for land and resource management,'' U.S. Department of Agriculture,
  Forest Service, Rocky Mountain Research Station, Tech. Rep. RMRS-GTR-315,
  2013.

\bibitem{8620983}
S.~Dian, P.~Cheng, Q.~Ye, J.~Wu, R.~Luo, C.~Wang, D.~Hui, N.~Zhou, D.~Zou,
  Q.~Yu, and X.~Gong, ``Integrating wildfires propagation prediction into early
  warning of electrical transmission line outages,'' \emph{IEEE Access},
  vol.~7, pp. 27\,586--27\,603, 2019.

\bibitem{BirgLouv97}
J.~R. Birge and F.~Louveaux, \emph{Introduction to Stochastic
  Programming}.\hskip 1em plus 0.5em minus 0.4em\relax New York, NY, USA:
  Springer-Verlag, 1997.

\bibitem{HaoxiangIJOC}
H.~Yang and H.~Nagarajan, ``Optimal power flow in distribution networks under
  {N-1} disruptions: A multistage stochastic programming approach,''
  \emph{INFORMS Journal on Computing}, vol.~34, no.~2, pp. 690--709, 2022.

\bibitem{zou2016nested}
J.~Zou, S.~Ahmed, and X.~A. Sun, ``Stochastic dual dynamic integer
  programming,'' \emph{Mathematical Programming}, vol. 175, no. 1-2, pp.
  461--502, 2019.

\bibitem{CellularAutomata}
J.~Freire and C.~Dacamara, ``Using cellular automata to simulate wildfire
  propagation and to assist in fire prevention and fighting,'' \emph{Natural
  Hazards and Earth System Sciences Discussions}, pp. 1--17, 2018.

\bibitem{RTS-GLMC}
C.~Barrows, A.~Bloom, A.~Ehlen, J.~Ikäheimo, J.~Jorgenson, D.~Krishnamurthy,
  J.~Lau, B.~McBennett, M.~O’Connell, E.~Preston, A.~Staid, G.~Stephen, and
  J.-P. Watson, ``The {IEEE} reliability test system: A proposed 2019 update,''
  \emph{IEEE Transactions on Power Systems}, vol.~35, no.~1, pp. 119--127,
  2020.

\bibitem{maptools}
\BIBentryALTinterwordspacing
MapTools, ``Why use {UTM} coordinates,'' 2023. [Online]. Available:
  \url{https://www.maptools.com/tutorials/utm/why_use_utm}
\BIBentrySTDinterwordspacing

\bibitem{WFPI}
\BIBentryALTinterwordspacing
``Wildland fire potential index {(WFPI)}.'' [Online]. Available:
  \url{https://www.usgs.gov/fire-danger-forecast/wildland-fire-potential-index-wfpi}
\BIBentrySTDinterwordspacing

\bibitem{Alexandridis2008}
A.~Alexandridis, D.~Vakalis, C.~Siettos, and G.~Bafas, ``A cellular automata
  model for forest fire spread prediction: The case of the wildfire that swept
  through {Spetses} island in 1990,'' \emph{Applied Mathematics and
  Computation}, vol. 204, no.~1, pp. 191--201, 2008.

\bibitem{PoissionRegression}
S.~Yang, W.~Zhou, S.~Zhu, L.~Wang, L.~Ye, X.~Xia, and H.~Li, ``Failure
  probability estimation of overhead transmission lines considering the spatial
  and temporal variation in severe weather,'' \emph{Journal of Modern Power
  Systems and Clean Energy}, vol.~7, no.~1, pp. 131--138, 2019.

\bibitem{lemarechal1995new}
C.~Lemar{\'e}chal, A.~Nemirovskii, and Y.~Nesterov, ``New variants of bundle
  methods,'' \emph{Mathematical programming}, vol.~69, no.~1, pp. 111--147,
  1995.

\bibitem{TWRAS}
\BIBentryALTinterwordspacing
``Texas {A\&M} forest service: {Texas} wildfire risk assessment portal.''
  [Online]. Available: \url{https://texaswildfirerisk.com/}
\BIBentrySTDinterwordspacing

\bibitem{NuclearPower}
\BIBentryALTinterwordspacing
{World Nuclear Association}, ``Economics of nuclear power,'' 2022. [Online].
  Available:
  \url{https://world-nuclear.org/information-library/economic-aspects/economics-of-nuclear-power.aspx}
\BIBentrySTDinterwordspacing

\bibitem{WindPower}
\BIBentryALTinterwordspacing
D.~Blewett, ``Wind turbine cost: {H}ow {M}uch? {A}re {T}hey {W}orth {I}t {I}n
  2022?'' 2022. [Online]. Available:
  \url{https://weatherguardwind.com/how-much-does-wind-turbine-cost-worth-it/}
\BIBentrySTDinterwordspacing

\bibitem{NuclearFinancing}
\BIBentryALTinterwordspacing
{World Nuclear Association}, ``Financing nuclear energy,'' 2020. [Online].
  Available:
  \url{https://world-nuclear.org/information-library/economic-aspects/financing-nuclear-energy.aspx}
\BIBentrySTDinterwordspacing

\bibitem{TransmissionCost}
\BIBentryALTinterwordspacing
``2021 transmission cost report.'' Australian Energy Market Operator, Tech.
  Rep., 2021. [Online]. Available:
  \url{https://aemo.com.au/-/media/files/major-publications/isp/2021/transmission-cost-report.pdf?la=en}
\BIBentrySTDinterwordspacing

\bibitem{JuMP}
I.~Dunning, J.~Huchette, and M.~Lubin, ``{JuMP}: A modeling language for
  mathematical optimization,'' \emph{SIAM Review}, vol.~59, no.~2, pp.
  295--320, 2017.

\bibitem{Julia}
S.~K. Jeff~Bezanson, Alan~Edelman and V.~Shah, ``Julia: A fresh approach to
  numerical computing,'' \emph{SIAM Review}, 2014.

\bibitem{gurobi}
\BIBentryALTinterwordspacing
{Gurobi Optimization, Inc.}, \emph{Gurobi optimizer reference manual.}, 2014.
  [Online]. Available: \url{https://www.gurobi.com}
\BIBentrySTDinterwordspacing

\bibitem{PowerModels}
C.~Coffrin, R.~Bent, K.~Sundar, Y.~Ng, and M.~Lubin, ``Powermodels.jl: An
  open-source framework for exploring power flow formulations,'' in \emph{2018
  Power Systems Computation Conference (PSCC)}, June 2018, pp. 1--8.

\bibitem{Agents}
G.~Datseris, A.~R. Vahdati, and T.~C. DuBois, ``Agents.jl: a performant and
  feature-full agent-based modeling software of minimal code complexity,''
  \emph{Simulation}, p. 003754972110688, 2022.

\bibitem{yang2021robust}
H.~Yang, D.~P. Morton, C.~Bandi, and K.~Dvijotham, ``Robust optimization for
  electricity generation,'' \emph{INFORMS Journal on Computing}, vol.~33,
  no.~1, pp. 336--351, 2021.

\end{thebibliography}
\end{document}